\providecommand{\U}[1]{\protect\rule{.1in}{.1in}}
\newtheorem{theorem}{Theorem}
\newtheorem{conjecture}[theorem]{Conjecture}
\newtheorem{example}[theorem]{Example}
\newtheorem{lemma}[theorem]{Lemma}
\newtheorem{proposition}[theorem]{Proposition}
\newenvironment{proof}[1][Proof]{\noindent\textbf{#1.} }{\ \rule{0.5em}{0.5em}}
\newenvironment{proofm}[1][Proof of Theorem \ref{te-1}]{\noindent\textbf{#1.} }{\ \rule{0.5em}{0.5em}}
\begin{document}

\title{Matrices in  ${\cal A}(R,S)$ with minimum  $t$-term ranks}
\author{Ros\'{a}rio Fernandes\thanks{Corresponding author.\,Email: mrff@fct.unl.pt. This work was partially supported by project UID/MAT/00297/2019.}\\CMA and Faculdade de Ci\^{e}ncias e Tecnologia\hfill\\Universidade Nova de Lisboa\\2829-516 Caparica, Portugal. \and Henrique
F. da Cruz\thanks{Email: hcruz@ubi.pt.This work was partially supported by project UID/MAT/00212/2019.}\\
Departamento de Matem\'atica da Universidade da Beira
Interior,\hfill \\Rua Marqu\^es D'Avila e Bolama, 6201-001
Covilh\~a, Portugal.
\and Susana C. Palheira\thanks{Email: susanap1112@gmail.com.}\\Faculdade de Ci\^{e}ncias e Tecnologia\hfill\\Universidade Nova de Lisboa\\2829-516 Caparica, Portugal.}
\maketitle

\begin{abstract}  Let $R$ and $S$ be two sequences of nonnegative integers in nonincreasing order and with the same sum, and let  ${\cal A}(R,S)$ be the class  of all $(0,1)$-matrices having row sum $R$ and column sum $S$. For a  positive integer $t$, the $t$-term rank of a $(0,1)$-matrix $A$ is defined as the maximum number of $1$'s in $A$ with at most one $1$ in each column, and at most $t$ $1$'s in each row. In this paper, we address conditions for the existence of a  matrix in a class   ${\cal A}(R,S)$ that realizes all  the minimum $t$-term ranks, for  $t\geq 1$.

\end{abstract}

\vspace{0,4cm}

{\it Keywords}: t-term rank,  (0,1)-matrix, Gale-Ryser Theorem, Network flows

\vspace{0,2cm}

{\it AMS Subject of Classification}: 05A15, 05C50, 05D15

\section{Introduction}

\hspace{3ex}Consider the following problem: ``We are organizing a  social dinner for the participants of a mathematical meeting.  The participants are from   $n$ different countries. We intent to seat the participants  in $m$ tables, each one with a fix number of seats, so that in each table there are  no two participants with the same nationality. The number of seats is equal to the number of participants and there is a possible distribution in these conditions.
After dinner, there will be a show. Before the show, the host will call some participants for a joke. The host will choose  the maximum number of participants selecting, at most, one per table and, at most, one per country. As we do not know when the artists will be ready for the show, the host will repeat the joke with another group, with the maximum number of participants, again choosing, at most, one  per table and one per country, but the  countries  are now different from the   countries  selected in the first group.  The host  will repeat this joke until the show begins, or all countries were chosen.
How to distribute the participants  in the tables so that after $t$ rounds of jokes the total number of evolved participants  is as small as possible?"

The above problem is a  classical problem of distributing  $n$ elements into $m$ sets with some constraints. An important tool for solving these kind of problems are matrices  whose entries are just $0$'s and $1$'s, the $(0,1)$-matrices. In fact,  Ford and Fulkerson noted   that   a $(0,1)$-matrix  can be regarded as distributing  $n$ elements into $m$ sets: the $1$'s in row $i$ designate the elements that occur in the $i$th set, and the $1$'s in column $j$ designate the sets that contain the $j$th element (see  \cite{ford}). Therefore, the $(0,1)$-matrices are essential tools in many combinatorial investigations, and hence these matrices are among those who have been received more attention in the last years (see \cite{B2,B3,B-14,fer2,fer1}. Using a $(0,1)$-matrices for  modeling a certain distribution of the participants into the tables, in our problem, the number of selected participants after $t$ rounds is a combinatorial parameter (properties which are invariant under arbitrary permutations of its rows and  columns) of a $(0,1)$-matrix called {\it $t$-term rank}. The  $t$-term rank of $(0,1)$-matrix $A$, denoted $\rho_t (A)$, is the maximum number of $1^{\prime}$s in $A$ with at most one $1$ in each column and at most $t$ $1^{\prime}$s in each row. When $t=1$, we have the well-known term rank of $A$, denoted $\rho (A)$. In terms of the incidence matrix of sets vs. elements, the $t$-term rank of a $(0,1)$-matrix is  the maximum number of distinct elements that we have if we choose at most $t$ elements in each set, associated. So a solution for the above problem is a $(0,1)$-matrix with prescribed row and column sum vectors, that realizes all the minimum $t$-term ranks, for  $t>1$. In this paper, we focus our attention on this kind  of $(0,1)$-matrices, that is,   $(0,1)$-matrices with prescribed row and columns sum vectors with all the minimum  $t$-term ranks, for  $t>1$.

\section{Background on $\mathcal{A}(R,S)$\label{back}}

\hspace{3ex}Let $R=(R_1,\ldots ,R_m)$, and $S=(S_1,\ldots ,S_n)$ be two partitions of the same weight (this is, $R$ and $S$ are integral vectors such that $R_1\geq\ldots\geq R_m>0$, $S_1\geq \ldots\geq S_n>0$ and $R_1+\ldots +R_m=S_1+\ldots +S_n$). The class of all $(0,1)$-matrices with row sum vector $R$ and column sum vector $S$ is denoted by ${\cal A}(R,S)$. This class  has been heavily investigated (see \cite{B2,B3,B5} for details) since the fifties, and many notable results have been obtained. The aim of this section is to make a brief sketch of  these  results.

A question that first arise when we study a class ${\cal A}(R,S)$ is to know when it is nonempty. This problem was solved independently by Gale (see \cite{G}) and by Ryser (see \cite{Ry}). In fact, they proved a theorem, now  called the {\it  Gale-Ryser theorem}, which states that  the class ${\cal A}(R,S)$ is nonempty if and only if $S$ is majorized  by $R^*=(R^*_1,\ldots ,R^*_{R_1})$ (the conjugate partition of $R$, defined by $R_{j}^{\ast}=|\{i:\ m\geq i\geq 1,\ R_{i}\geq j\}|$, for $j=1,\ldots ,R_1$), i.e.

$$\sum_{i=1}^k S_i \leq \sum_{i=1}^k R^*_i,\,\,\,\,\,\,\,k=1,\ldots,n,$$

\noindent where the equality holds for $k=n$.

 In the Ryser's paper, \cite{Ry},  an algorithm for the construction of a matrix in ${\cal A}(R,S)$ is also presented. This algorithm, called {\it  the Ryser's algorithm}, starts with a $m$-by-$n$ $(0,1)$-matrix $\overline{A}$ whose row sum vector is $R$ and whose column sum vector is the conjugate vector $R^*$. Thus the 1's occupy the initial positions in each row. The construction begins by shifting $S_n$ of the last 1's of certain rows of $\overline{A}$ to column $n$. The 1's in column $n$ are in rows of  $\overline{A}$ with largest sum, giving preference to the bottommost positions in case of ties. Reducing by 1 those $R_i$ corresponding to the rows that contains the 1's placed in column $n$, we obtain a vector $R'$ which also satisfies the monotonicity assumption. We now proceed inductively to construct the columns $n-1,n-2,\ldots,1$. Ryser proved that if  $S$ is majorized  by $R^*$, then this algorithm can be  carried out in order produce a matrix in  ${\cal A}(R,S)$. When ${\cal A}(R,S)\neq \emptyset$ the matrix $\widetilde{A}$ constructed by Ryser's algorithm is often called the {\it canonical matrix} of  ${\cal A}(R,S)$.

 \vspace{0,1cm}

 Another fundamental result due to Ryser is the so called {\it Ryser's interchange theorem}, \cite{B3}, which states  that given two matrices $A,B \in {\cal A}(R,S)$, $A$ can be transformed in $B$ by a finite sequence of {\it interchanges}.  An interchange is an operation which replaces a $2$-by-$2$ submatrix $$\left[\begin{array}{cc}1&0\\ 0&1\end{array}\right] \hspace{4ex}\mbox{ into }\hspace{4ex}\left[\begin{array}{cc}0&1\\ 1&0\end{array}\right],$$ and vice-versa. Therefore, every matrix in ${\cal A}(R,S)$ can be transformed by interchanges into the canonical matrix   $\widetilde{A}$  of  ${\cal A}(R,S)$.

 \vspace{0,1cm}

 Ford and Fulkerson, presented another criterium for the nonemptiness  of ${\cal A}(R,S)$. Let $T(R,S)=T=[t_{i,j}]$ be the $(m+1)$-by-$(n+1)$ matrix (the rows are indexed in $\{0,\ldots,m\}$ and the columns are indexed in $\{0,\ldots,n\}$), such that
$$t_{k,l}=kl-\sum_{j=1}^l S_j + \sum_{i=k+1}^m R_i,\,\,\,\,(\mbox{for }0\leq k\leq m,\,\,\,\,0\leq j\leq n).$$
This matrix is called the {\it structure matrix} associated  with $R$ and $S$, and Ford and Fulkerson proved that ${\cal A}(R,S)\neq \emptyset$ if and only if each entry of $T$ is nonnegative.

Let $t$ be a positive integer. Motivated by the study of combinatorial batch codes (see \cite{B4,Cod1,P}), the authors of \cite{B5} defined the  $t$-term rank of a $(0,1)$-matrix $A$, denoted $\rho_t (A)$, as the maximum number of $1^{\prime}$s in $A$ with at most one $1$ in each column and at most $t$ $1^{\prime}$s in each row. When $t=1$, we have the term rank of $A$, denoted $\rho (A)$.

By the  K$\ddot{o}$nig-Egerv$\acute{a}$ry theorem (see \cite{B3}, p.6) $\rho (A)$ equals the minimum number of lines that cover all the $1^{\prime}$s of $A$:
$$\rho (A)=\min\{e+f:\,\, \exists \mbox{ a cover of }A\mbox{ with } e\mbox{ rows and }f \mbox{ columns}\}.$$

In \cite{B5} the authors established a generalization of this theorem:

\begin{proposition}\label{te+f} {\rm \cite{B5}}  Let $A$ be an $m$-by-$n$, $(0,1)$-matrix and let $t$ be a positive integer. Then
$$\rho_t (A)=\min\{te+f:\ \exists \mbox{ a cover of }A\mbox{ with } e\mbox{ rows and }f \mbox{ columns}\}.$$

\end{proposition}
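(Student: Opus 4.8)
The plan is to establish the two inequalities separately. The inequality $\rho_t(A)\leq \min\{te+f\}$ is the easy ``weak duality'' direction, obtained by a direct counting argument. Fix a set $P$ of $1$'s of $A$ with at most one $1$ per column and at most $t$ ones per row and with $|P|=\rho_t(A)$, and fix any cover of $A$ consisting of $e$ rows and $f$ columns. Every $1$ of $P$ lies in one of the $e$ covering rows or in one of the $f$ covering columns. Those lying in the chosen columns number at most $f$ (at most one per column), while those lying in the chosen rows number at most $te$ (at most $t$ per row); hence $\rho_t(A)=|P|\leq te+f$, and minimizing over all covers gives $\rho_t(A)\leq \min\{te+f\}$.

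For the reverse inequality I would reduce the $t$-term rank to the ordinary term rank and then invoke the K\"onig--Egerv\'ary theorem quoted above. Form the $tm$-by-$n$ matrix $A'$ obtained from $A$ by repeating each row $t$ times, and write $r_i^1,\ldots,r_i^t$ for the copies of row $i$. An ordinary term-rank configuration of $A'$ (at most one $1$ per row and per column) corresponds exactly to a selection of $1$'s of $A$ with at most one $1$ per column (the columns are not duplicated, so no column is used twice) and at most $t$ ones per original row (one per copy); conversely, any $t$-term-rank configuration of $A$ lifts to such a configuration of $A'$ by assigning its selected entries in row $i$, which occupy distinct columns, to distinct copies of $r_i$. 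Therefore $\rho_t(A)=\rho(A')$.

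Now apply K\"onig--Egerv\'ary to $A'$, so that $\rho(A')=\min\{e'+f\}$ over covers of $A'$ by $e'$ rows and $f$ columns. The key step is to show that a minimum cover of $A'$ can be taken in ``block'' form, in which for each original row $i$ either all $t$ copies are used or none is. Indeed, if in a minimum cover some copy $r_i^k$ is left uncovered, then every $1$ in row $i$ of $A$ must be covered by a chosen column; but then any other copy $r_i^{k'}$ that the cover uses is redundant and may be deleted without losing coverage, contradicting minimality. Hence each original row contributes $0$ or $t$ covering lines. Writing $E$ for the set of original rows all of whose copies are used and $F$ for the set of covering columns, the cover has size $t|E|+|F|$, and $(E,F)$ covers every $1$ of $A$. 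Thus $\min\{te+f:(e,f)\text{ a cover of }A\}\leq t|E|+|F|=\rho(A')=\rho_t(A)$, which together with weak duality yields the claimed equality.

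I expect the block-form reduction to be the main obstacle, since it requires controlling the structure of a minimum cover of $A'$ so that the duplicated rows collapse back to original rows with the correct weight $t$. An alternative route that bypasses this argument is a direct max-flow/min-cut proof on the network with a source, a sink, source-to-row arcs of capacity $t$, row-to-column arcs of large capacity present exactly where $a_{i,j}=1$, and column-to-sink arcs of capacity $1$. There the maximum flow equals $\rho_t(A)$, while a minimum cut, which can never sever a row-to-column arc, severs a set $E$ of source arcs (cost $t|E|$) and a set $F$ of sink arcs (cost $|F|$) whose placement forces $(E,F)$ to cover every $1$ of $A$; the max-flow/min-cut theorem then delivers the same conclusion, which is why the network-flow viewpoint is the natural setting for this identity.
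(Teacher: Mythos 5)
Your proof is correct. One point of order first: the paper does not prove Proposition \ref{te+f} at all --- it is quoted from \cite{B5} --- so there is no in-paper argument to compare against, and your proposal must be judged on its own merits. It holds up. The weak-duality count ($\leq f$ selected $1$'s in covering columns, $\leq te$ in covering rows) is right; the identification $\rho_t(A)=\rho(A')$ via $t$-fold row duplication is argued correctly in both directions, the lifting direction using, as it must, the fact that the selected entries of a given row occupy distinct columns and hence can be spread over distinct copies; and the block-form lemma for a minimum cover of $A'$ --- the one place such a reduction could genuinely fail --- is handled correctly: an uncovered copy of row $i$ forces every column meeting a $1$ of row $i$ into the cover, whence any used copy of row $i$ is deletable, contradicting minimality, so each original row contributes $0$ or $t$ covering lines and the cover collapses to a cover of $A$ of weight $te+f$. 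This reduction-to-K\"onig--Egerv\'ary route is the natural elementary way to get the generalization. Your alternative max-flow/min-cut sketch is also sound and is in fact the viewpoint this paper itself adopts: Section 3 computes $\rho_t$ by running Ford--Fulkerson on essentially the network you describe (source-to-row arcs of capacity $t$, column-to-sink arcs of capacity $1$), so that route would mesh most directly with the rest of the paper, while your duplication argument has the advantage of invoking only the classical K\"onig--Egerv\'ary theorem already quoted in Section 2.
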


\vspace{2ex}

The term rank and the $t$-term rank are two of several combinatorial parameters  of a $(0,1)$-matrix. In this paper we turn our attention to the minimal $t$-term rank of a nonempty class ${\cal A}(R,S)$, denoted by $\tilde{\rho_t}(R,S)$. So, $$\tilde{\rho_t}(R,S)=\min \{\rho_t(A):\ A\in {\cal A}(R,S)\}.$$

A formula for computing the $\tilde{\rho_1}(R,S)$ was first derived by Haber and simplified by Brualdi \cite{B3}. A generalization for $t>1$ was stated by Fernandes and da Cruz in \cite{fer}. This formula is obtained using a matrix $\Phi$ defined as follows: Let $[t_{i,j}]$  be  the $(m+1)$-by-$(n+1)$ structure matrix associated with $R$ and $S$. Define the matrix $\Phi=[\phi_{k,l}]$,  also denoted by $\Phi(R,S)$  by

$$\phi_{k,l}=\min\{t_{i_1,l+j_2}+t_{k+i_2,j_1}+(k-i_1)(l-j_1)\},$$

\noindent  for all $0\leq k\leq m,$  $0\leq l\leq n$, and the minimum is taken  over all integers   $i_1,i_2$ and $j_1,j_2$  that satisfy
$$0\leq i_1\leq k\leq k+i_2\leq m,\,\,\,\,\text{ and }\,\,\,\,0\leq j_1\leq  l\leq  l+j_2\leq n.$$

\begin{proposition} {\rm\cite{fer}} \label{t} Let $R$ and $S$ be partitions of the same weight such that the class  $\mathcal{A}(R,S)$ is nonempty. Let $t$ be a positive integer. Then,
	$$ \tilde{\rho}_t(R,S)= \min \{te+f : 0\leq e \leq m,\,\,\,0 \leq f\leq n,\,\,\,\phi_{e,f}=t_{e,f}\}.$$
	
\end{proposition}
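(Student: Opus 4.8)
The plan is to combine Proposition \ref{te+f} with the structure matrix and thereby turn the minimization of $\rho_t$ over the class into a feasibility question about zero submatrices. First, fix $A\in\mathcal{A}(R,S)$: by Proposition \ref{te+f}, $\rho_t(A)$ is the minimum of $te+f$ over covers of $A$ by $e$ rows and $f$ columns, and a cover by $e$ rows and $f$ columns is precisely an all-zero $(m-e)\times(n-f)$ submatrix sitting on the uncovered rows and columns. Taking the minimum over $A$ as well and interchanging the two minima (the cost $te+f$ is independent of $A$) gives
$$\tilde\rho_t(R,S)=\min\{te+f:\ \exists\,A\in\mathcal{A}(R,S)\text{ with an all-zero }(m-e)\times(n-f)\text{ submatrix}\}.$$
So the statement reduces to identifying the pairs $(e,f)$ for which the class contains a matrix with a zero submatrix of the complementary size, and to proving that this set is exactly $\{(e,f):\phi_{e,f}=t_{e,f}\}$.

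Second, I would normalize the position of the zero block. Since $R$ and $S$ are nonincreasing, the last $m-e$ rows carry the smallest row sums and the last $n-f$ columns the smallest column sums, so these are the cheapest rows and columns to vacate. A rearrangement lemma---provable either from the Ford--Fulkerson/Gale feasibility condition, which is monotone under replacing the chosen rows and columns by ones with smaller sums, or by Ryser interchanges that slide a given zero block toward the bottom-right corner---shows that if some $(m-e)\times(n-f)$ zero submatrix is realizable in the class, then the corner block $D$ on rows $e+1,\dots,m$ and columns $f+1,\dots,n$ is realizable. Hence I may assume the zero block is $D$. Now the key identity for the structure matrix enters: counting ones by blocks shows that for every $A\in\mathcal{A}(R,S)$ and every $(k,l)$ one has $t_{k,l}=(\text{number of }0\text{'s in the top-left }k\times l\text{ block})+(\text{number of }1\text{'s in the bottom-right }(m-k)\times(n-l)\text{ block})$; consequently a matrix with $D$ identically zero exists if and only if $\min_{A}(\text{number of }1\text{'s in }D)=0$.

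Third---and this is the technical heart---I must show that this minimum vanishes exactly when $\phi_{e,f}=t_{e,f}$. The number $\min_A(\text{number of }1\text{'s in }D)$ is the optimum of the transportation problem of minimizing $\sum_{(i,j)\in D}x_{ij}$ over matrices with entries in $[0,1]$, row sums $R$ and column sums $S$ (an integral polytope whose vertices are the members of $\mathcal{A}(R,S)$), so by the max-flow--min-cut theorem it equals the value of a minimum cut in the bipartite network in which row node $i$ and column node $j$ are joined with capacity one precisely when $(i,j)\notin D$. The optimal cut separates the rows by two thresholds $i_1\le e\le e+i_2$ and the columns by two thresholds $j_1\le f\le f+j_2$, these four indices being forced to nest around $e$ and $f$ because $D$ is the bottom-right corner and $R,S$ are sorted. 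Evaluating the cut value and rewriting its partial row-sums and column-sums through the definition of $t_{\cdot,\cdot}$ produces exactly the three summands $t_{i_1,f+j_2}+t_{e+i_2,j_1}+(e-i_1)(f-j_1)$ appearing in $\phi_{e,f}$; minimizing over the cut is minimizing over $(i_1,i_2,j_1,j_2)$, so the least number of ones forced into $D$ is governed by $\phi_{e,f}$, and the condition $\min_A(\text{number of }1\text{'s in }D)=0$ collapses, with $t_{e,f}$ serving as the reference value from the identity above, into the single equation $\phi_{e,f}=t_{e,f}$.

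Assembling the three reductions yields $\tilde\rho_t(R,S)=\min\{te+f:\phi_{e,f}=t_{e,f}\}$. I expect the main obstacle to be the third step: correctly pinning down the combinatorial shape of the minimum cut (why the separating sets must nest around the block boundaries $e$ and $f$) and then algebraically collapsing its value into the precise structure-matrix expression defining $\phi_{e,f}$, together with checking the boundary cases $e\in\{0,m\}$ or $f\in\{0,n\}$. The first two steps are comparatively routine once the block-counting identity for $t_{k,l}$ is in hand.
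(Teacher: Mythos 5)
Your plan is correct and is, in substance, the proof that this paper's own machinery supports. Note that the paper never proves Proposition \ref{t} (it is imported from \cite{fer}); within the paper the statement is exactly the combination of Proposition \ref{te+f} with Theorem \ref{t3}, plus the normalization you isolate as your second step, which is genuinely needed because Theorem \ref{t3} speaks of the \emph{first} $e$ rows and \emph{first} $f$ columns while a minimizing cover can sit anywhere. Your third step is then a proof of Theorem \ref{t3} itself, and the method you propose for it --- feasibility of the corner zero block via cuts in the bipartite network, reduction to nested cuts using that $R$ and $S$ are nonincreasing, and translation of cut capacities into structure-matrix entries --- is the same technique the paper runs for its two-block generalization, Proposition \ref{po-1}, there phrased through Lemma \ref{l1} (which is itself a max-flow--min-cut feasibility theorem, so your route and the paper's are the same argument in two dialects). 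Your algebra does come out right: for a nested cut with parameters $i_1,i_2,j_1,j_2$, the capacity minus $\sum_j S_j$ equals $t_{i_1,f+j_2}+t_{e+i_2,j_1}+(e-i_1)(f-j_1)-t_{e,f}$, so ``every cut has capacity at least $\sum_j S_j$'' is exactly $\phi_{e,f}\geq t_{e,f}$. You should add the one-line observation that $\phi_{e,f}\leq t_{e,f}$ always (take $i_1=i_2=0$, $j_1=f$, $f+j_2=n$, and use $t_{0,n}=0$), which is what turns the inequality into the equality $\phi_{e,f}=t_{e,f}$ appearing in the statement.

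The one sentence that is false as written is the claim that the minimum over $A$ of the number of $1$'s in $D$ ``equals the value of a minimum cut'' of the network whose row--column edges are the pairs $(i,j)\notin D$. The minimum cut of that network equals its maximum flow $M\leq\sum_j S_j$, not the minimum number of ones in $D$; what you need, and what your subsequent reasoning actually uses, is only the feasibility statement: a matrix of $\mathcal{A}(R,S)$ vanishing on $D$ exists if and only if $M=\sum_j S_j$, i.e., if and only if every cut has capacity at least $\sum_j S_j$ (integrality of maximum flow then produces the matrix). The stronger identification of the minimum number of ones in $D$ with $\sum_j S_j-M$ is true but would require a separate completion argument (a maximum flow of the restricted network need not a priori saturate the first $e$ rows and first $f$ columns), and it is not needed here. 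With that sentence repaired, and with the rearrangement lemma of your second step written out in full (the two-row exchange argument, trading a zero-block row for a row of smaller sum, is the quickest of your two suggestions), the proposal is a complete and correct proof that matches the intended one.
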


\vspace{2ex}
In \cite{fer} the authors proved the following theorem which, in particular, implies the existence of a special matrix in $\mathcal{A}(R,S)$ with $t$-term rank $ \tilde{\rho}_t(R,S)$.

\begin{theorem}\label{t3} {\rm \cite{fer}} Let $R$ and $S$ be partitions of the same weight such that the class  $\mathcal{A}(R,S)$ is nonempty. Then, there is a matrix in $\mathcal{A}(R,S)$ all of whose $1'$s are contained in the union of its first $e$ rows and first $f$ columns if and only if $\phi_{e,f} = t_{e,f}$
\end{theorem}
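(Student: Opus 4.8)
The plan is to translate the geometric condition into a statement about counting zeros. Write each $A\in\mathcal{A}(R,S)$ in block form, with leading block $A_{11}$ on rows $1,\ldots,e$ and columns $1,\ldots,f$ and trailing block $A_{22}$ on rows $e+1,\ldots,m$ and columns $f+1,\ldots,n$; the requirement that all $1$'s lie in the union of the first $e$ rows and first $f$ columns is precisely $A_{22}=0$. The first step is the elementary identity, valid for every $A\in\mathcal{A}(R,S)$, that the number $z(A)$ of $0$'s in $A_{11}$ plus the number $o(A)$ of $1$'s in $A_{22}$ equals $t_{e,f}$: if $a,b,c,o(A)$ denote the numbers of $1$'s in the four blocks (with $a$ in $A_{11}$, $b$ in the lower-left, $c$ in the upper-right), then $\sum_{j=1}^{f}S_j=a+b$ and $\sum_{i=e+1}^{m}R_i=b+o(A)$, whence $z(A)+o(A)=ef-a+o(A)=ef-\sum_{j=1}^{f}S_j+\sum_{i=e+1}^{m}R_i=t_{e,f}$. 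Since $o(A)\ge 0$, this gives $z(A)\le t_{e,f}$, and some $A$ has $A_{22}=0$ if and only if the value $z(A)=t_{e,f}$ is attained.

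Next I would establish the two inequalities that settle the ``only if'' direction. For an upper bound on $z(A)$, fix $A$ and admissible indices $i_1,i_2,j_1,j_2$, and split $A_{11}$ into the leading $i_1\times f$ block, the block on rows $i_1+1,\ldots,e$ and columns $1,\ldots,j_1$, and the block on rows $i_1+1,\ldots,e$ and columns $j_1+1,\ldots,f$. The third block has at most $(e-i_1)(f-j_1)$ zeros; the first is contained in the leading $i_1\times(f+j_2)$ submatrix, whose zeros number at most $t_{i_1,f+j_2}$ by the identity above; and the second is contained in the leading $(e+i_2)\times j_1$ submatrix, whose zeros number at most $t_{e+i_2,j_1}$. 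Summing gives $z(A)\le t_{i_1,f+j_2}+t_{e+i_2,j_1}+(e-i_1)(f-j_1)$, and minimizing over the indices yields $z(A)\le\phi_{e,f}$ for every $A$. On the other hand, the admissible choice $i_1=e,\ i_2=m-e,\ j_1=0,\ j_2=0$ gives $\phi_{e,f}\le t_{e,f}+t_{m,0}+0=t_{e,f}$, since $t_{m,0}=0$. Combining, if some $A$ satisfies $A_{22}=0$, then $t_{e,f}=z(A)\le\phi_{e,f}\le t_{e,f}$, forcing $\phi_{e,f}=t_{e,f}$.

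It remains to produce, when $\phi_{e,f}=t_{e,f}$, a matrix with $A_{22}=0$, and here I would use network flows. Build the transportation network with a source feeding each row-node $i$ through an arc of capacity $R_i$, each column-node $j$ draining to the sink through an arc of capacity $S_j$, and a unit-capacity arc from row $i$ to column $j$ exactly when $(i,j)$ is allowed, that is, not both $i>e$ and $j>f$. Integral feasible flows of value $N=\sum_i R_i$ correspond precisely to matrices in $\mathcal{A}(R,S)$ with $A_{22}=0$, so by the max-flow--min-cut theorem (and integrality of the maximum flow) such a matrix exists if and only if every cut has capacity at least $N$. The main obstacle is the evaluation of this minimum cut: using that $R$ and $S$ are nonincreasing, one reduces an arbitrary cut, by an exchange argument against the bilinear crossing term, to one whose kept rows and columns form initial segments of $\{1,\ldots,e\}$, $\{e+1,\ldots,m\}$, $\{1,\ldots,f\}$ and $\{f+1,\ldots,n\}$ of sizes $i_1,i_2,j_1,j_2$. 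A direct computation then identifies the capacity of such a cut with $N-t_{e,f}+\bigl(t_{i_1,f+j_2}+t_{e+i_2,j_1}+(e-i_1)(f-j_1)\bigr)$, so the minimum cut equals $N-t_{e,f}+\phi_{e,f}$. Under the hypothesis $\phi_{e,f}=t_{e,f}$ this is exactly $N$, the flow is feasible, and the resulting matrix lies in $\mathcal{A}(R,S)$ with $A_{22}=0$. The delicate point throughout is precisely this monotonicity reduction to segment-type cuts, which is what makes the minimum cut coincide with the combinatorial quantity $\phi_{e,f}$.
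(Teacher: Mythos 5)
Your proposal is correct. Note first that this paper never proves Theorem \ref{t3} itself: it imports it from \cite{fer}, and the closest in-paper argument is Proposition \ref{po-1} (the generalization to two nested corner zero blocks), which is proved by invoking Lemma \ref{l1} and then reducing the feasibility inequalities to interval index sets via monotonicity of $R$ and $S$. Your max-flow--min-cut argument is the same mechanism in flow language: the inequality of Lemma \ref{l1} for a pair $(I,J)$ is exactly the statement that the cut with source-side rows $I$ and sink-side columns $J$ has capacity at least $N$, and your reduction to segment-type cuts is legitimate because the crossing term depends only on the four cardinalities while the linear terms are optimized by initial segments. I checked the computation you compressed: for such a cut the capacity is
$$\sum_{i=i_1+1}^{e}R_i+\sum_{i=e+i_2+1}^{m}R_i+\sum_{j=j_1+1}^{f}S_j+\sum_{j=f+j_2+1}^{n}S_j+i_1j_1+i_1j_2+i_2j_1,$$
which indeed equals $N-t_{e,f}+t_{i_1,f+j_2}+t_{e+i_2,j_1}+(e-i_1)(f-j_1)$, so the minimum cut is $N-t_{e,f}+\phi_{e,f}$ as you claim. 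What is genuinely different in your write-up is the ``only if'' direction: the identity $z(A)+o(A)=t_{e,f}$ combined with the bound $z(A)\le\phi_{e,f}\le t_{e,f}$ is a short, self-contained counting argument that is more elementary than reading that direction off a feasibility criterion, and it yields the useful inequality $\phi_{e,f}\le t_{e,f}$ for free. What the paper's packaging buys in exchange is reusability: Lemma \ref{l1} applies verbatim to the nested two-corner pattern of Proposition \ref{po-1}, which is what Theorem \ref{te-1} actually needs, whereas your route would require re-evaluating the minimum cut for that more complicated forbidden region.
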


 Our  main purpose in this paper is to know if in any nonempty class ${\cal A}(R,S)$ there is a matrix  that realizes all  the minimum $t$-term ranks, for $t\geq 1$. The analogous problem for the maximum $t$-term rank in ${\cal A}(R,S)$ was solved in   \cite{B5}. In fact, the authors proved that in any nonempty class  $  {\cal A}(R,S)$ there always exists a matrix $A$ which realizes  all  the maximum  $t$-term ranks, for $t\geq 1$, and the authors of \cite{fer} conjectured that the same happens for all the minimum  $t$-term ranks, with $t\geq 1$.

 \begin{conjecture}{\rm \cite{fer}}\label{c}
 	If $R$ and $S$ are partitions of the same  weigh such that ${\cal A}(R,S)$ is nonempty, then there is a matrix $A \in {\cal A}(R,S)$ such that  $\rho_t(A)=\tilde{\rho}_t(R,S)$, for all $t\geq 1$
 \end{conjecture}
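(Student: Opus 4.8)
The plan is to exhibit a single matrix $A\in\mathcal{A}(R,S)$ for which, for every positive integer $t$, the optimal cover in Proposition~\ref{te+f} can be chosen among a fixed family that simultaneously certifies the minimum in Proposition~\ref{t}. First I would recast the goal: for any $A$ and any $t$ we have $\rho_t(A)\ge\tilde{\rho}_t(R,S)$, so $A$ realizes every minimum $t$-term rank precisely when, for each $t$, some cover $(e,f)$ of $A$ attains $te+f=\tilde{\rho}_t(R,S)$. By Proposition~\ref{t} such a minimizing pair lies in the set $\mathcal{C}=\{(e,f):\phi_{e,f}=t_{e,f}\}$. Using Theorem~\ref{t3} I would show that $\mathcal{C}$ is an up-set in the componentwise order: if the $1$'s of some matrix can be confined to the first $e$ rows and first $f$ columns, the same matrix confines them to the first $e'$ rows and first $f'$ columns whenever $e'\ge e$ and $f'\ge f$, so the ``iff'' of Theorem~\ref{t3} gives $(e',f')\in\mathcal{C}$. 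Consequently the minimal elements of $\mathcal{C}$ form an antichain $(a_1,b_1),\dots,(a_r,b_r)$ with $a_1<\cdots<a_r$ and $b_1>\cdots>b_r$, a staircase, and since $t>0$ the minimum of $te+f$ over $\mathcal{C}$ is always attained at one of these staircase corners.

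This reduces Conjecture~\ref{c} to the following simultaneous-confinement lemma, a multi-corner generalization of Theorem~\ref{t3}: there exists $A\in\mathcal{A}(R,S)$ whose support avoids the lower-right staircase region $\bigcup_{i=1}^{r}\bigl([a_i+1,m]\times[b_i+1,n]\bigr)$; equivalently, $A$ has a cover at every corner $(a_i,b_i)$. Granting this lemma, the resulting $A$ satisfies $\rho_t(A)\le t a_i+b_i=\tilde{\rho}_t(R,S)$ at the optimal corner for each $t$, and the reverse inequality is automatic, proving the conjecture. The case $r=1$ is exactly Theorem~\ref{t3}.

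To prove the lemma I would model $\mathcal{A}(R,S)$ with the prescribed forbidden staircase as a feasible-flow problem in the Ford--Fulkerson network associated with $R$ and $S$, with the arcs corresponding to forbidden positions deleted, and test feasibility by max-flow--min-cut. Each cut is indexed by a choice of rows and columns cut inside the steps of the staircase, and its capacity is an expression in the structure-matrix entries $t_{k,l}$; the local cuts that respect a single step $(a_i,b_i)$ reproduce exactly the four-parameter minimization over $i_1,i_2,j_1,j_2$ defining $\phi_{a_i,b_i}$, so the hypothesis $(a_i,b_i)\in\mathcal{C}$, i.e.\ $\phi_{a_i,b_i}=t_{a_i,b_i}$, makes each such cut nonnegative. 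The engine of the argument is the supermodularity of the structure matrix: a direct computation gives the constant mixed second difference $t_{k,l}+t_{k+1,l+1}-t_{k,l+1}-t_{k+1,l}=1$, which forces binding cuts to be rectangular. I expect the main obstacle to be precisely the cuts that straddle several steps of the staircase: a priori such a cut yields an inequality not controlled by any single $\phi_{a_i,b_i}$, and the heart of the proof is to use supermodularity to decompose any straddling min-cut into pieces, each dominated by a rectangular cut at one corner $(a_i,b_i)\in\mathcal{C}$, thereby certifying global feasibility from the local equalities.

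As a fallback for the decomposition step I would argue by induction on the number $r$ of corners using Ryser's interchange theorem: starting from a matrix realizing the first $r-1$ corners and a matrix realizing $(a_r,b_r)$ obtained from Theorem~\ref{t3}, I would push $1$'s out of the last forbidden block by interchanges chosen to preserve the already-cleared blocks, the nesting $a_i<a_r$ and $b_i>b_r$ ensuring that an interchange correcting the bottom-right block never reintroduces a $1$ into a previously emptied block. The delicate point on this route is again to guarantee that the required interchanges always exist, which reduces to the same supermodular decomposition property; so I regard the straddling-cut analysis of the third paragraph as the essential difficulty in either formulation.
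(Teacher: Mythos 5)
Your opening reduction is sound: by Theorem \ref{t3} the set $\mathcal{C}=\{(e,f):\phi_{e,f}=t_{e,f}\}$ is an up-set, its minimal elements form a staircase, and a matrix $A$ realizes every $\tilde{\rho}_t(R,S)$ exactly when, for each $t$, it admits a cover at some staircase corner minimizing $te+f$. The fatal gap is the ``simultaneous-confinement lemma'' to which you reduce everything: that lemma is false, and so is Conjecture \ref{c} itself --- Section 4 of the paper is devoted to a counterexample, so no proof attempt can succeed. Take $R=(6,5,4,3,3,2,2,1,1)$ and $S=(7,3,3,2,2,1,1,1,1,1,1,1,1,1,1)$. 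There the corners $(3,3)$, $(2,5)$ and $(1,9)$ all belong to $\mathcal{C}$, and each is the \emph{unique} admissible minimizer for $t=1$, $t=3$ and $t=5$ respectively ($\tilde{\rho}_1=6$, $\tilde{\rho}_3=11$, $\tilde{\rho}_5=14$), so a single matrix realizing these three values would have to be coverable simultaneously by $3$ rows and $3$ columns, by $2$ rows and $5$ columns, and by $1$ row and $9$ columns; the paper shows by a direct counting argument (tracking which rows can absorb the many columns of sum $1$) that no matrix in $\mathcal{A}(R,S)$ admits all three covers. Hence no analysis of straddling cuts, however clever, can close the gap you correctly identified as the heart of the matter.

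The obstruction is exactly the one you flagged as the ``main obstacle'', and it is not a technical difficulty but a genuine failure. Even for just two nested corners $(e',f')$ and $(e,f)$ with $e'<e$ and $f<f'$, the exact feasibility condition for one matrix to avoid both forbidden blocks is $\psi_{e',e;f,f'}\geq t_{e,f}+t_{e',f'}$ (Proposition \ref{po-1}), where $\psi$ is a minimum over cuts straddling both steps; this is strictly stronger in general than the conjunction of the single-corner equalities $\phi_{e,f}=t_{e,f}$ and $\phi_{e',f'}=t_{e',f'}$, and the example above is an instance where the single-corner equalities hold while simultaneous confinement fails. Supermodularity of $T$ (your mixed second difference equal to $1$) does not force straddling min-cuts to decompose into single-corner rectangular cuts: the straddling cut carries irreducible cross terms, which is why the products $(a-i_1)(d-c-j_2)$, $(b-a-i_2)(c-j_1)$ and $(a-i_1)(c-j_1)$ appear in the definition of $\psi_{a,b;c,d}$. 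Your interchange fallback fails for the same reason, since the existence of the required interchanges is equivalent to the same multi-corner feasibility. What survives of your plan is its conditional form, and that is precisely the paper's positive result: Proposition \ref{po-2} shows that when the two relevant corners are $(1,f')$ and $(2,f)$ with $S_f=\cdots=S_n=1$, the single-corner equalities do imply the straddling inequality, and Theorem \ref{te-1} then runs essentially your argument to produce a matrix realizing $\tilde{\rho}_1,\ldots,\tilde{\rho}_t$ under those extra hypotheses.
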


 This paper is organized as follows: As we wrote in the Introduction, the $(0,1)$-matrices can be regarded as incidence matrices of sets vs. elements, so, in the next section we compute the $t$-term rank of a matrix using the Ford-Fulkerson algorithm. Despite what happens with the maximum $t$-term rank, in the third section we present a class $ {\cal A}(R,S)$ where there is no matrix $A$ that realizes all the minimum  $t$-term ranks, for $t\geq 1$. However with some restrictions on $R$ and $S$ it is possible to prove that there are classes ${\cal A}(R,S)$ where  such matrices exist. This is the subject of the fourth section. The existence of these classes  depends of the existence of a special matrix with prescribed zero blocks. In the last section we present an algorithm for construct $(0,1)$-matrices with these constraints .

 \section{Network flows and the t-term rank}

\hspace{3ex}Another way to obtain the $t$-term rank of a $(0,1)$-matrix $A=[a_{ij}]$ is using the network flows. Let $G=(X\cup Y, U)$ be the bipartite direct graph where $X=\{1,\ldots ,m\}$, $Y=\{1,\ldots ,n\}$ and there is the edge $(x,y)$ of $G$, with $x\in X$ and $y\in Y$, if and only if $a_{xy}=1$.

Let $G_0=(Z_0,U_0)$ be the graph obtained from $G$ putting two new vertices, $s$ and $w$, and the edges $(s,p)$, $(f,w)$, for $p=1,\ldots ,m$, $f=1,\ldots ,n$.

Let $v_0$ be the map from $U_0$ to $\{0,1,\ldots ,t\}$ such that $$v_0((c,d))=\left\{\begin{array}{ll}
t & \mbox{ if }c=s\\
1 & \mbox{ otherwise }
\end{array}\right..$$

The $t$-term rank can also be obtained using the Ford-Fulkerson algorithm \cite{ford}:
\begin{enumerate}
\item Let $i=0$.
\item If there is a path $P_i$, in $G_0$, from $s$ to $w$, $$s,(s,x_{i_1}), x_{i_1}, (x_{i_1},y_{i_1}),y_{i_1}, (x_{i_2},y_{i_1}),x_{i_2},\ldots ,y_{i_h},(y_{i_h},w),w,$$ such that $$v_i((s,x_{i_1}))>0,$$ $$v_i((x_{i_f}, y_{i_f}))=1,\hspace{3ex}\mbox{ for } f=1,\ldots ,h,$$ $$v_i(( x_{i_{f+1}},y_{i_f}))=0,\hspace{3ex}\mbox{ for } f=1,\ldots ,h-1,$$ $$v_i(( y_{i_h},w))=1,$$ then go to $3.$. Otherwise, go to $4.$.
    \item Let $v_{i+1}$ be the map from $U_0$ to $\{0,1,\ldots ,t\}$ such that $$v_{i+1}((c,d))=\left\{\begin{array}{ll}
v_i((c,d))-1 & \mbox{ if $(c,d)\in P_i$ and if there is $l\in\{1,\ldots ,h-1\},$}\\
& \mbox{such that $c=x_{i_{l+1}}$, then $d\neq y_{i_l}$} \\
1 & \mbox{ if $(c,d)\in P_i$ and  there is $l\in\{1,\ldots ,h-1\},$}\\
& \mbox{such that $c=x_{i_{l+1}}$ and $d= y_{i_l}$} \\
v_i((c,d)) & \mbox{ otherwise }
\end{array}\right.$$ Go to $2.$ with $i+1$.
\item Stop. $\rho_t(A)=i$.
\end{enumerate}

\begin{example} Let $A=\left[\begin{array}{cccc}1&1&1&1\\ 1&0&0&0\\ 1&0&0&0\end{array}\right].$ Using the algorithm we can obtain $\rho_2(A)$ as described:

Let $X=\{1,2,3\}$ be the set of rows of $A$ and $Y=\{a,b,c,d\}$ be the set of columns of $A$. Let $G_0$ be the direct graph whose set of vertices, $Z_0$, is $\{s\}\cup X\cup Y\cup\{w\}$ and set of edges is $U_0=\{(s,1),(s,2),(s,3),(1,a),(1,b),(1,c),(1,d),(2,a),(3,a)\}\cup$ $\{(a,w),(b,w),(c,w),(d,w)\}$.

Let $v_0$ be the map from $U_0$ to $\{0,1,2\}$ such that $$v_0((x,y))=\left\{\begin{array}{ll}
2 & \mbox{ if }x=s\\
1 & \mbox{ otherwise }
\end{array}\right..$$
We begin the  algorithm with $i=0$ and the path $$s, (s,1), 1,(1,b),b,(b,w),w.$$

Let $v_1$ be the map from $U_0$ to $\{0,1,2\}$ such that $$v_1((x,y))=\left\{\begin{array}{ll}
2 & \mbox{ if }x=s \mbox{ and }y\neq 1\\
0 & \mbox{ if }(x,y)=(1,b) \mbox{ and }(x,y)=(b,w)\\
1 & \mbox{ otherwise }
\end{array}\right..$$

With $i=1$, the path $$s, (s,1), 1,(1,c),c,(c,w),w$$ is a path in the conditions of step {\rm 2.}, let $v_2$ be the map from $U_0$ to $\{0,1,2\}$ such that $$v_2((x,y))=\left\{\begin{array}{ll}
2 & \mbox{ if }x=s \mbox{ and }y\neq 1\\
0 & \mbox{ if }(x,y)\in\{(1,b),(1,c),(b,w),(c,w),(s,1)\}\\
1 & \mbox{ otherwise }
\end{array}\right..$$

With $i=2$, the path $$s, (s,2), 2,(2,a),a,(a,w),w$$ is a path in the conditions of step {\rm 2.}, let $v_3$ be the map from $U_0$ to $\{0,1,2\}$ such that $$v_3((x,y))=\left\{\begin{array}{ll}
2 & \mbox{ if }(x,y)=(s,3)\\
0 & \mbox{ if }(x,y)\in\{(1,b),(1,c),(b,w),(c,w),(s,1),(2,a),(a,w)\}\\
1 & \mbox{ otherwise }
\end{array}\right..$$

Since with $i=3$ there is any path in the conditions of step {\rm 2.}, then $$\rho_2(A)=i=3.$$

\end{example}

\vspace{2ex}

 \section{A counterexample for conjecture \ref{c}}

\hspace{3ex} Let $R=(6,5,4,3,3,2,2,1,1)$, and  $S=(7,3,3,2,2,1,1,1,1,1,1,1,1,1,1).$

The structure matrix associated with $R$ and $S$ is

\vspace{2ex}

$$\small T=\begin{array}{cc} & \begin{array}{cccccccccccccccc}0 \mbox{ }& 1\mbox{ }\mbox{ }& 2\mbox{ }& 3\mbox{ }\mbox{ }& 4\mbox{ } &5\mbox{ }&6\mbox{ }&7\mbox{ }\mbox{ }&8\mbox{ }&9\mbox{ }&10&11&12&13\mbox{ }&14\mbox{ }&15\end{array} \\ \begin{array}{c} 0\\ 1\\ 2\\ 3\\ 4\\ 5\\ 6\\ 7\\ 8\\ 9\end{array}&\left[\begin{array}{cccccccccccccccc}27&20&17&14&12&10&9&8&7&6&5&4&3&2&1&\cellcolor[gray]{0.8}0\\ 21&15&13&11&10&9&9&9&9&\cellcolor[gray]{0.8}9&\cellcolor[gray]{0.8}9&\cellcolor[gray]{0.8}\cellcolor[gray]{0.8}9&\cellcolor[gray]{0.8}9&\cellcolor[gray]{0.8}9&\cellcolor[gray]{0.8}9&9\\ 16&11&10&9&9&\cellcolor[gray]{0.8}9&\cellcolor[gray]{0.8}10&\cellcolor[gray]{0.8}11&\cellcolor[gray]{0.8}12&13&14&15&16&17&18&19\\ 12&8&8&\cellcolor[gray]{0.8}8&\cellcolor[gray]{0.8}9&10&12&14&16&18&20&22&24&26&28&30\\ 9&6&7&8&10&12&15&18&21&24&27&30&33&36&39&42\\ 6&4&\cellcolor[gray]{0.8}6&8&11&14&18&22&26&30&34&38&42&46&50&54\\ 4&3&6&9&13&17&22&27&32&37&42&47&52&57&62&67\\ 2&\cellcolor[gray]{0.8}2&6&10&15&20&26&32&38&44&50&56&62&68&74&80\\ 1&2&7&12&18&24&31&38&45&52&59&66&73&80&87&94\\ \cellcolor[gray]{0.8}0&2&8&14&21&28&36&44&52&60&68&76&84&92&100&108\end{array}\right]\end{array}$$
 and the matrix $\Phi$ is
$$\small \Phi =\begin{array}{cc} & \begin{array}{cccccccccccccccc}0 & 1& 2& 3\mbox{ }\mbox{ }& 4\mbox{ }\mbox{ } &5\mbox{ }&6\mbox{ }\mbox{ }&7\mbox{ }&8\mbox{ }&9\mbox{ }&10&11&12\mbox{ }&13&14\mbox{ }&15\end{array} \\ \begin{array}{c} 0\\ 1\\ 2\\ 3\\ 4\\ 5\\ 6\\ 7\\ 8\\ 9\end{array}&\left[\begin{array}{cccccccccccccccc}0&0&0&0&0&0&0&0&0&0&0&0&0&0&0&\cellcolor[gray]{0.8}0\\ 0&1&2&3&4&5&6&7&8&\cellcolor[gray]{0.8}9&\cellcolor[gray]{0.8}9&\cellcolor[gray]{0.8}9&\cellcolor[gray]{0.8}9&\cellcolor[gray]{0.8}9&\cellcolor[gray]{0.8}9&9\\ 0&2&4&6&8&\cellcolor[gray]{0.8}9&\cellcolor[gray]{0.8}10&\cellcolor[gray]{0.8}11&\cellcolor[gray]{0.8}12&13&14&15&16&17&18&19\\ 0&2&5&\cellcolor[gray]{0.8}8&\cellcolor[gray]{0.8}9&10&12&14&16&18&20&22&24&26&28&30\\ 0&2&6&8&10&12&15&18&21&24&27&30&33&36&39&42\\ 0&2&\cellcolor[gray]{0.8}6&8&11&14&18&22&26&30&34&38&42&46&50&54\\ 0&2&6&9&13&17&22&27&32&37&42&47&52&57&62&67\\ 0&\cellcolor[gray]{0.8}2&6&10&15&20&26&32&38&44&50&56&62&68&74&80\\ 0&2&7&12&18&24&31&38&45&52&59&66&73&80&87&94\\ \cellcolor[gray]{0.8}0&2&8&14&21&28&36&44&52&60&68&76&84&92&100&108\end{array}\right]\end{array}.$$
The grey entries are the first entries, in each column, that are equal in matrices $T$ and $\Phi$.

So, by Proposition \ref{t} $$ \tilde{\rho}_1(R,S)=1*3+3=6,$$
$$ \tilde{\rho}_2(R,S)=2*3+3=2*2+5=9,$$
$$ \tilde{\rho}_3(R,S)=3*2+5=11,$$
$$ \tilde{\rho}_4(R,S)=4*2+5=4*1+9=13,$$
$$ \tilde{\rho}_5(R,S)=5*1+9=14,$$
$$ \tilde{\rho}_6(R,S)=6*1+9=6*0+15=15.$$

Suppose there is a matrix $A\in {\cal A}(R,S)$ such that $\rho_i (A)= \tilde{\rho}_i(R,S),$ for $i=1,2,3,4,5,6$.  Since $ \tilde{\rho}_5(R,S)=5*1+9=14,$ by Theorem \ref{t3}, the 1's of $A$ can be  covered by one row and nine columns. Using the fact that all columns of $A$ has at least a nonzero entry, and $A$ has five columns with at least two nonzero entries, then with one row we must cover all 1's of six columns of $A$. The unique row of $A$ with at least $6$ nonzero entries is the first. Consequently, we can assume that the first row of $A$ is

 $$\left[\begin{array}{ccccccccccccccc}0&0&0&0&0&0&0&0&0&1&1&1&1&1&1\end{array}\right].$$

Let $C$ be the matrix obtained from $A$ removing the first row and the last six columns. Since $ \tilde{\rho}_3(R,S)=3*2+5=11,$ then the 1's of $A$ can be covered with two rows and five columns. Using the above arguments,  the 1's of $C$ can be covered with one row and five columns. Note that $C$ has nine columns and five of them has at least two nonzero entries. This implies that the row that cover the 1's of $C$, cover the 1's in the last four columns of $C$. Only the first and the second rows of $C$ has at least four nonzero entries. Let $D$ be the matrix obtained from $C$ removing  the last four columns.

The matrix $D$ has five columns and eight rows. Moreover, the row sum vector of $D$ is $G_1=(1,4,3,3,2,2,1,1)$ or $G_2=(5,0,3,3,2,2,1,1)$, and the column sum vector of $D$ is $H=(7,3,3,2,2)$.

Since $ \tilde{\rho}_1(R,S)=1*3+3=6,$ then the 1's of  $A$ can be covered with three rows and three columns. We have two cases:

If the row sum vector of $D$ is $G_2=(5,0,3,3,2,2,1,1)$ and its column sum is $H=(7,3,3,2,2)$, using the above arguments, all 1's of $D$ will be covered with one row and three columns. This is impossible because  five columns of $D$ has at least two nonzero entries.

If the row sum vector of $D$ is $G_1=(1,4,3,3,2,2,1,1)$,  and its column sum is $H=(7,3,3,2,2)$, using the above arguments, all 1's of $D$ will be covered with two rows (the first row of $D$ is one of these two rows) and three columns. This is impossible because four columns of the matrix obtained from $D$ removing the first row has at least two nonzero entries.

Therefore, no matrix in ${\cal A}(R,S)$ realizes all  the minimum $i$-term ranks, for $i\geq 1$.

 \vspace{0,2cm}

\section{Special partitions $R$ and $S$}

\hspace{3ex} Let $R=(R_1,\ldots ,R_m)$ and  $S=(S_1,\ldots ,S_n)$ be partitions of the same weight such that $\mathcal{A}(R,S)$ is nonempty. In this section we present conditions for the existence of a matrix in $\mathcal{A}(R,S)$ that realizes all  the minimum $t$-term ranks, for  $t\geq 1$. So the next result is our main result:

 \begin{theorem} \label{te-1} Let $R$ and $S$ be be two partitions of the same weight such that $R=(R_1,\ldots ,R_m)$, $S=(S_1,\ldots ,S_n)$, with $m>2$, $n>2$,  and $\mathcal{A}(R,S)$ is nonempty. Let $t$ be a positive integer. If the minimum integers
	$f,f^{\prime}$ such that $\phi_{2,f}=t_{2,f}$ and $\phi_{1,f^{\prime}}=t_{1,f^{\prime}}$ verify
	$1\leq f<f^{\prime}< n$, $S_f=S_{f+1}=\ldots =S_n=1$, and for all $1\leq k\leq t$,
	$\tilde{\rho}_k(R,S)\in\{k+f^{\prime},2k+f\}$, then there exists a matrix $A \in \mathcal{A}(R,S)$ such that $$\rho_k (A)=\widetilde{\rho}_k(R,S),\hspace{6ex}\mbox{ for }\hspace{6ex}k=1,\ldots,t.$$
	
\end{theorem}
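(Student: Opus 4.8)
The plan is to reduce the simultaneous realization of all the minimum $k$-term ranks to the construction of a single matrix carrying a prescribed ``staircase'' pattern of forced zeros, and then to produce that matrix.

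First I would record the universal lower bound: since $\tilde\rho_k(R,S)=\min\{\rho_k(B):B\in\mathcal{A}(R,S)\}$, every $A\in\mathcal{A}(R,S)$ satisfies $\rho_k(A)\ge\tilde\rho_k(R,S)$. Hence it suffices to exhibit one matrix $A$ with $\rho_k(A)\le\tilde\rho_k(R,S)$ for each $k=1,\dots,t$; by Proposition~\ref{te+f} this only requires, for every such $k$, a line cover of $A$ with $e$ rows and $g$ columns satisfying $ke+g=\tilde\rho_k(R,S)$. Because the hypothesis gives $\tilde\rho_k(R,S)\in\{k+f',\,2k+f\}$, the two covers that matter are ``row $1$ together with columns $1,\dots,f'$'' (value $k+f'$) and ``rows $1,2$ together with columns $1,\dots,f$'' (value $2k+f$). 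Both are simultaneously available precisely when $A$ has the two zero blocks
$$ \{2,\dots,m\}\times\{f'+1,\dots,n\}=0,\qquad \{3,\dots,m\}\times\{f+1,\dots,n\}=0, $$
the first forcing every $1$ outside row $1$ into columns $1,\dots,f'$, the second forcing every $1$ outside rows $1,2$ into columns $1,\dots,f$. For such an $A$ one obtains $\rho_k(A)\le\min\{k+f',2k+f\}\le\tilde\rho_k(R,S)\le\rho_k(A)$, so equality holds for all $k$ at once and the theorem follows.

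This reduces everything to one existence statement: there is a matrix in $\mathcal{A}(R,S)$ whose support avoids the union of those two rectangular blocks. I would obtain the blocks separately from Theorem~\ref{t3}: the equality $\phi_{1,f'}=t_{1,f'}$ yields a matrix $A'$ with all $1$'s in the union of row $1$ and columns $1,\dots,f'$, while $\phi_{2,f}=t_{2,f}$ yields a matrix $A''$ with all $1$'s in the union of rows $1,2$ and columns $1,\dots,f$. The task is to realize both zero regions in a single matrix, and this is where $f<f'<n$ and $S_f=S_{f+1}=\dots=S_n=1$ enter decisively. Since every column of index $\ge f+1$ has sum $1$, each of the columns $f+1,\dots,f'$ carries exactly one $1$; starting from $A'$ (which already confines rows $2,\dots,m$ to columns $1,\dots,f'$ and places the single $1$'s of columns $f'+1,\dots,n$ in row $1$) one only has to relocate, for each column $j\in\{f+1,\dots,f'\}$ whose unique $1$ sits in a row $i\ge 3$, that $1$ upward into row $1$ or $2$ (or into columns $1,\dots,f$) by interchanges that never reintroduce a $1$ in the already-cleared block $\{2,\dots,m\}\times\{f'+1,\dots,n\}$. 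Feasibility of these moves is guaranteed by the existence of $A''$ together with the minimality of $f$ and $f'$; equivalently it can be read as a min-cut condition for the associated degree-constrained bipartite flow, the cut value factoring through the two structure-matrix identities $\phi_{1,f'}=t_{1,f'}$ and $\phi_{2,f}=t_{2,f}$.

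The main obstacle is exactly this last combination step: Theorem~\ref{t3} delivers the two zero blocks only in separate matrices, and merging them into one member of $\mathcal{A}(R,S)$ is not automatic. I expect the cleanest route is a direct construction of $(0,1)$-matrices with prescribed zero blocks---the content announced for the final section---verifying its feasibility condition against the data $\phi_{1,f'}=t_{1,f'}$, $\phi_{2,f}=t_{2,f}$, and $S_j=1$ for $j\ge f$. The interchange argument above is the hands-on alternative, whose only delicate point is choosing the relocation column so that no forbidden position in rows $\ge 3$ becomes occupied, which the column-sum-one hypothesis is exactly what makes possible.
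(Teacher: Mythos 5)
Your reduction is correct, and it is in fact a cleaner, direct version of what the paper does by contradiction: a matrix $A$ whose support avoids both blocks $\{2,\ldots,m\}\times\{f^{\prime}+1,\ldots,n\}$ and $\{3,\ldots,m\}\times\{f+1,\ldots,n\}$ satisfies, by Proposition \ref{te+f}, $\rho_k(A)\leq\min\{k+f^{\prime},2k+f\}\leq\tilde{\rho}_k(R,S)\leq\rho_k(A)$, and the hypothesis $\tilde{\rho}_k(R,S)\in\{k+f^{\prime},2k+f\}$ closes the sandwich. (The paper reaches the same two-block matrix $Q$, only after a detour through a maximal counterexample and Lemma \ref{li-1}.)

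However, the existence of that single matrix carrying both zero blocks is the entire technical content of the theorem, and your proposal does not prove it; you explicitly defer it (``merging them into one member of $\mathcal{A}(R,S)$ is not automatic\ldots I expect the cleanest route is\ldots''), and neither of your two suggested routes is carried out. For the interchange route: to clear a $1$ at position $(i,j)$ with $i\geq 3$ and $f+1\leq j\leq f^{\prime}$ you need a partner position $(i^{\prime},j^{\prime})$ with $i^{\prime}\in\{1,2\}$, $j^{\prime}\leq f$, $a_{i^{\prime},j^{\prime}}=1$, $a_{i,j^{\prime}}=0$, $a_{i^{\prime},j}=0$; such a partner need not exist, one may need chains of interchanges, and whether suitable chains exist while never re-occupying the already-cleared block is exactly the feasibility question you are trying to settle --- ``guaranteed by the existence of $A^{\prime\prime}$ together with the minimality of $f$ and $f^{\prime}$'' is an assertion, not an argument. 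The flow route is what the paper actually does: Lemma \ref{l1} yields (Proposition \ref{po-1}) that a matrix with the two nested zero blocks exists if and only if $\psi_{1,2;f,f^{\prime}}\geq t_{1,f^{\prime}}+t_{2,f}$, where the reduction to this inequality rests on a nontrivial double-counting identity for the structure matrix; and Proposition \ref{po-2} then verifies the inequality by a four-case computation that uses $S_f=\cdots=S_n=1$ to evaluate $t_{1,p}$ and $t_{2,s}$ for $p,s\geq f$, and invokes the equalities $\phi_{2,f}=t_{2,f}$ and $\phi_{1,f^{\prime}}=t_{1,f^{\prime}}$ (note that the equalities themselves suffice here; minimality of $f,f^{\prime}$ is not what drives the verification). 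Until you supply these two steps, or an equivalent of them, your argument has a genuine gap at its central point.
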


\vspace{0,2cm}

For proving this theorem we start the following lemma:

\vspace{0,15cm}

\begin{lemma}{\rm \cite{B2}} \label{l1} Let $(R_1,\ldots, R_m)$ and $(S_1,\ldots,S_n)$  be two partitions of the same weight, and let $[c_{i,j}]$ be an $m$-by-$n$ nonnegative integral matrix. There is  a nonnegative integral matrix $A=[a_{i,j}]$, satisfying
	$$0\leq a_{i,j} \leq c_{i,j},\,\,\text{ for }i\in\{1,\ldots,m\} \text{ and }\,\,\,j\in\{1,\ldots,n\},$$
	$$\sum_{j=1}^n a_{i,j}=R_i,\,\,\,\,\,\,\text{ for }i\in\{1,\ldots,m\},$$
	and
	$$\sum_{i=1}^n a_{i,j}=S_j,\,\,\,\,\,\,\text{ for }j\in\{1,\ldots,n\},$$
	if and only if, for all $I\subseteq \{1,\ldots,m\}$ and $J\subseteq \{1,\ldots,n\}$,
	$$\sum_{i\in I, j\in J} c_{i,j}\geq \sum_{j\in J}S_j-\sum_{i \notin I}R_i.$$ 
\end{lemma}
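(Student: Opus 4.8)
The plan is to prove the two implications separately: the forward (necessity) direction by a direct double-counting, and the harder backward (sufficiency) direction by a maximum-flow/minimum-cut argument built on the same Ford--Fulkerson machinery used elsewhere in the paper.

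For necessity, assume a matrix $A=[a_{i,j}]$ with the listed properties exists, and fix $I\subseteq\{1,\ldots,m\}$ and $J\subseteq\{1,\ldots,n\}$. I would write $\sum_{j\in J}S_j=\sum_{j\in J}\sum_{i=1}^m a_{i,j}$ and split the inner sum over $i\in I$ and $i\notin I$. On the block $i\in I,\ j\in J$ the bound $a_{i,j}\le c_{i,j}$ gives $\sum_{i\in I,\,j\in J}a_{i,j}\le\sum_{i\in I,\,j\in J}c_{i,j}$; on the complementary rows, enlarging the column range and using $a_{i,j}\ge 0$ together with $\sum_{j=1}^n a_{i,j}=R_i$ gives $\sum_{i\notin I,\,j\in J}a_{i,j}\le\sum_{i\notin I}R_i$. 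Adding these and rearranging produces exactly $\sum_{i\in I,\,j\in J}c_{i,j}\ge\sum_{j\in J}S_j-\sum_{i\notin I}R_i$, so this direction is immediate.

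For sufficiency I would introduce the capacitated network $N$ with a source $s$, a sink $w$, a node $u_i$ for each row, a node $v_j$ for each column, and arcs $(s,u_i)$ of capacity $R_i$, arcs $(u_i,v_j)$ of capacity $c_{i,j}$, and arcs $(v_j,w)$ of capacity $S_j$. Setting $a_{i,j}$ equal to the flow on $(u_i,v_j)$ sets up a correspondence between feasible integral matrices $A$ (with $0\le a_{i,j}\le c_{i,j}$, row sums $R_i$, column sums $S_j$) and integral flows in $N$ that saturate every source arc and every sink arc; such a flow has value $W:=\sum_i R_i=\sum_j S_j$. Because the cut consisting of all arcs $(s,u_i)$ has capacity $W$, the maximum flow value is at most $W$, and the Ford--Fulkerson integrality theorem guarantees an integral maximum flow. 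Hence it suffices to show that every cut has capacity at least $W$, for then the minimum cut---and so the maximum flow---equals $W$, and the corresponding integral flow yields the desired matrix $A$.

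The decisive step is to match cuts with index-set pairs. A cut is determined by the set $I$ of rows whose node lies on the source side and the set $B$ of columns whose node lies on the source side; its capacity is $\sum_{i\notin I}R_i+\sum_{i\in I,\,j\notin B}c_{i,j}+\sum_{j\in B}S_j$. Writing $J=\{1,\ldots,n\}\setminus B$ for the columns on the sink side, the inequality ``capacity $\ge W$'' simplifies, after cancelling $\sum_{j\in B}S_j$ against the corresponding part of $W=\sum_{j=1}^n S_j$, to precisely $\sum_{i\in I,\,j\in J}c_{i,j}\ge\sum_{j\in J}S_j-\sum_{i\notin I}R_i$. Thus the hypothesis, quantified over all $I$ and $J$, is exactly the statement that every cut has capacity at least $W$. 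The anticipated obstacle is purely bookkeeping: getting the complementation $J=\{1,\ldots,n\}\setminus B$ right and verifying that the three families of crossing arcs are accounted for without double counting, so that the algebraic identity between the cut capacity and the hypothesis is exact.
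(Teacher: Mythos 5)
The paper itself gives no proof of Lemma \ref{l1}: it is quoted from \cite{B2}, where it is established by essentially the same max-flow/min-cut argument you use (source--row--column--sink network with capacities $R_i$, $c_{i,j}$, $S_j$, integrality of maximum flows, and the observation that a cut determined by row set $I$ on the source side and column set $J$ on the sink side has capacity $\sum_{i\notin I}R_i+\sum_{i\in I,\,j\in J}c_{i,j}+\sum_{j\notin J}S_j$, so that ``every cut has capacity at least $W$'' is exactly the stated family of inequalities). Your proposal is correct and complete, including the easy double-counting for necessity, so it matches the canonical proof of the cited result.
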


 Let $T=[t_{i,j}]$  be  the $(m+1)$-by-$(n+1)$ structure matrix associated with $R$ and $S$. For $0\leq a< b\leq m$, and $0\leq c<d\leq n$, define the nonnegative integer  $\psi_{a,b;c,d}$ as
$$\begin{array}{lll}
\psi_{a,b;c,d}&=&\min \{t_{i_1,d+j_3}+t_{a+i_2,c+j_2}+t_{b+i_3,j_1}+(a-i_1)(d-c-j_2)+\\
 & &\,\,\,\,\,\,\,\,\,\,\,\, +(b-a-i_2)(c-j_1)+(a-i_1)(c-j_1)\},
\end{array}$$
where the minimum is taken  over all integers   $i_1,i_2,i_3$ and $j_1,j_2,j_3$  that satisfy
$$0\leq i_1\leq a\leq a+i_2\leq b\leq b+i_3\leq m$$ and  $$0\leq j_1\leq c\leq c+j_2\leq d\leq  d+j_3\leq n.$$

To compute a matrix in the conditions of  Theorem \ref{t3} we can use the modified Ryser's algorithm (see \cite{B-4} or Section 6) or the known algorithms from the network flows (see \cite{ford}). The following proposition is a generalization of Theorem \ref{t3}.

\begin{proposition} \label{po-1} Let $R$ and $S$ be two partitions of the same weight such that $R=(R_1,\ldots ,R_m)$, $S=(S_1,\ldots ,S_n)$, and $\mathcal{A}(R,S)$ is nonempty. Let $e,e^{\prime},f,f^{\prime}$ be integers such that $0\leq e^{\prime}< e\leq m$ and $0\leq f<f^{\prime}\leq n$. Then
 there is a matrix in $\mathcal{A}(R,S)$ of the form
\begin{equation}\label{M1}\left[
    \begin{array}{ccc}
      X & Y & V\\
      Z & W& O_{e-e^{\prime},n-f^{\prime}} \\
       U &O_{m-e,f^{\prime}-f} & O_{m-e,n-f^{\prime}}
    \end{array}
  \right].
  \end{equation}
  if and only if $$\psi_{e^{\prime},e;f,f^{\prime}}\geq t_{e,f}+t_{e^{\prime},f^{\prime}}.$$
\end{proposition}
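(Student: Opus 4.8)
The plan is to encode the prescribed zero pattern of (\ref{M1}) as an entrywise capacity and then apply Lemma~\ref{l1}. First I would define the $m$-by-$n$ nonnegative integral matrix $C=[c_{i,j}]$ by putting $c_{i,j}=0$ exactly on the three blocks forced to vanish in (\ref{M1}) (the positions with $e'<i\le e$ and $f'<j\le n$, with $e<i\le m$ and $f<j\le f'$, and with $e<i\le m$ and $f'<j\le n$) and $c_{i,j}=1$ elsewhere. Then any nonnegative integral $A$ with $0\le a_{i,j}\le c_{i,j}$, row sums $R$ and column sums $S$ is automatically a $(0,1)$-matrix of the form (\ref{M1}), and conversely. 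Hence, by Lemma~\ref{l1}, such a matrix exists if and only if
$$g(I,J):=\sum_{i\in I,\,j\in J}c_{i,j}+\sum_{i\notin I}R_i-\sum_{j\in J}S_j\ \ge\ 0\qquad\text{for all }I\subseteq\{1,\dots,m\},\ J\subseteq\{1,\dots,n\}.$$
Writing the rows as $A_1=\{1,\dots,e'\}$, $A_2=\{e'+1,\dots,e\}$, $A_3=\{e+1,\dots,m\}$ and the columns as $B_1=\{1,\dots,f\}$, $B_2=\{f+1,\dots,f'\}$, $B_3=\{f'+1,\dots,n\}$, the matrix $C$ is block-constant: it equals $1$ on the blocks $(1,1),(1,2),(1,3),(2,1),(2,2),(3,1)$ and $0$ on the remaining three. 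So, with $\alpha_p=|I\cap A_p|$ and $\beta_q=|J\cap B_q|$, the capacity sum equals $\alpha_1(\beta_1+\beta_2+\beta_3)+\alpha_2(\beta_1+\beta_2)+\alpha_3\beta_1$, depending on $I,J$ only through the six sizes.

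The next step is to shrink the range of the minimization. For fixed block sizes the capacity sum is constant, so minimizing $g$ reduces to minimizing $\sum_{i\notin I}R_i$ and maximizing $\sum_{j\in J}S_j$ separately. Since $R$ and $S$ are nonincreasing and each block is a contiguous index range, this forces the complement of $I$ to collect the smallest entries of $R$ in each $A_p$ and $J$ to collect the largest entries of $S$ in each $B_q$; equivalently, the minimizing $I$ and $J$ are initial segments of each block. Parametrizing these by $i_1=\alpha_1$, $i_2=\alpha_2$, $i_3=\alpha_3$ and $j_1=\beta_1$, $j_2=\beta_2$, $j_3=\beta_3$, the ranges $0\le i_1\le e'$, $0\le i_2\le e-e'$, $0\le i_3\le m-e$ (and the analogues for the $j$'s) reproduce exactly the constraints $0\le i_1\le a\le a+i_2\le b\le b+i_3\le m$ and $0\le j_1\le c\le c+j_2\le d\le d+j_3\le n$ appearing in the definition of $\psi_{a,b;c,d}$ with $(a,b,c,d)=(e',e,f,f')$.

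It then remains to identify $g$ on these extremal sets with the summand defining $\psi$. The plan is to expand the row-tails $\sum_{i\notin I}R_i$ and the block-initial column sums $\sum_{j\in J}S_j$ in terms of structure-matrix entries and verify the identity
\begin{align*}
g(I,J)+t_{e,f}+t_{e',f'}={}& t_{i_1,\,f'+j_3}+t_{e'+i_2,\,f+j_2}+t_{e+i_3,\,j_1}\\
&+(e'-i_1)(f'-f-j_2)+(e-e'-i_2)(f-j_1)+(e'-i_1)(f-j_1),
\end{align*}
whose right-hand side is precisely the expression minimized in $\psi_{e',e;f,f'}$. A convenient consistency check is the full choice $I=\{1,\dots,m\}$, $J=\{1,\dots,n\}$, where both sides reduce to $e'n+ef'+mf-ef-e'f'-(S_1+\cdots+S_n)$. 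Taking the minimum over all admissible six-tuples would then give $\min_{I,J}g(I,J)=\psi_{e',e;f,f'}-t_{e,f}-t_{e',f'}$, so that the inequality $g\ge 0$ for all $I,J$ is equivalent to $\psi_{e',e;f,f'}\ge t_{e,f}+t_{e',f'}$, completing the proof.

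I expect the displayed identity to be the main obstacle. The difficulty is that the three structure-matrix entries on the right involve the \emph{full} initial column sums $\sum_{j=1}^{f'+j_3}S_j$, $\sum_{j=1}^{f+j_2}S_j$, $\sum_{j=1}^{j_1}S_j$, whereas $g$ contains only the block-initial sums $\sum_{j\in J}S_j$; completing the former to the latter, together with the corresponding regrouping of the $R$-tails, generates exactly the three product corrections $(e'-i_1)(f'-f-j_2)$, $(e-e'-i_2)(f-j_1)$ and $(e'-i_1)(f-j_1)$, and matching these cross terms is the careful bookkeeping that the argument hinges on.
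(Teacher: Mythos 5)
Your proposal is correct and takes essentially the same route as the paper's own proof: the same block-constant capacity matrix $C$, the same application of Lemma~\ref{l1}, the same monotonicity reduction to initial segments within each row and column block, and the same identification of the resulting quantity with the summand defining $\psi_{e',e;f,f'}$ (your displayed identity is in fact true; the paper verifies the equivalent statement by counting $0$'s and $1$'s in block diagrams rather than by direct algebraic expansion). The only remaining work in your plan is that routine bookkeeping, so there is no substantive gap.
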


\begin{proof} The proof follows the steps of the proof of theorem 3.5.8 in \cite{B3}.  Let $C$ be the $m$-by-$n$ matrix such that
$$C=[c_{i,j}]=\left[
                \begin{array}{ccc}
                  J_{e^{\prime},f} & J_{e^{\prime},f^{\prime}-f} & J_{e^{\prime},n-f^{\prime}}\\
                  J_{e-e^{\prime},f}  &J_{e-e^{\prime},f^{\prime}-f} & O_{e-e^{\prime},n-f^{\prime}}  \\
                 J_{m-e,f} &O_{m-e,f^{\prime}-f} & O_{m-e,n-f^{\prime}}
                \end{array}
              \right],
$$
where $J_{u,v}$ denote the $u$-by-$v$ matrix whose entries are all equal to $1$. By Lemma \ref{l1}, there is a matrix in $\mathcal{A}(R,S)$ of the form (\ref{M1}) if and only if
\begin{equation} \label{i11}
\sum_{i\in I, j\in J} c_{i,j}\geq \sum_{j\in J}S_j-\sum_{i \notin I}R_i,
\end{equation}
 for all $I\subseteq \{1,\ldots,m\}$ and $J\subseteq \{1,\ldots,n\}$.

 For $I\subseteq \{1,\ldots,m\}$ and $J\subseteq \{1,\ldots,n\}$, we write $I=I_1 \cup I_2\cup I_3$ where  $I_1\subseteq \{1,\ldots,e^{\prime}\}$, $I_2\subseteq \{e^{\prime}+1,\ldots,e\}$ and $I_3\subseteq \{e+1,\ldots,m\}$, and we write $J=J_1 \cup J_2\cup J_3$ where  $J_1\subseteq \{1,\ldots,f\}$, $J_2\subseteq \{f+1,\ldots,f^{\prime}\}$ and $J_3\subseteq \{f^{\prime}+1,\ldots,n\}$. We agree to take  complements of $I_1$, $I_2$, $I_3$, $J_1$, $J_2$, and $J_3$ with respect to $\{1,\ldots,e^{\prime}\}$, $\{e^{\prime}+1,\ldots,e\}$,  $\{e+1,\ldots,m\}$, $\{1,\ldots,f\}$, $\{f+1,\ldots,f^{\prime}\}$, and $\{f^{\prime}+1,\ldots,n\}$, respectively. Then (\ref{i11}) is equivalent to
$$
|I_1||J_1|+|I_1||J_2|+|I_1||J_3|+|I_2||J_1|+|I_2||J_2|+|I_3||J_1|\geq$$
 $$\geq \sum_{j\in J_1\cup J_2\cup J_3}S_j-\sum_{i \in \overline{I_1}\cup\overline{I_2}\cup\overline{I_3}}R_i,
$$
for all $I_1\subseteq \{1,\ldots,e^{\prime}\}$, $I_2\subseteq  \{e^{\prime}+1,\ldots,e\}$, $I_3\subseteq  \{e+1,\ldots,m\}$,  $J_1\subseteq \{1,\ldots,f\}$,  $J_2\subseteq \{f+1,\ldots,f^{\prime}\}$ and $J_3\subseteq  \{f^{\prime}+1,\ldots,n\}$. Let $k_1=|I_1|$, $k_2 =|I_2|$, $k_3 =|I_3|$, $l_1=|J_1|$, $l_2=|J_2|$ and  $l_3=|J_3|$. Since $R$ and $S$ are nonincreasing, it follows that the last inequality is equivalent to
$$k_1l_1+k_1l_2+k_1l_3+k_2l_1+k_2l_2+k_3l_1\geq $$
 $$\geq \sum_{j=1}^{l_1}S_j + \sum_{j=f+1}^{f+l_2}S_j + \sum_{j=f^{\prime}+1}^{f^{\prime}+l_3}S_j-  \sum_{i=e^{\prime}-k_1+1}^{e^{\prime}}R_i-  \sum_{i=e-k_2+1}^{e}R_i - \sum_{i=m-k_3+1}^{m}R_i,$$
holding for all integers $k_1,k_2$, and $k_3$, with $0\leq k_1\leq e^{\prime}\leq e^{\prime}+k_2\leq e\leq e+k_3\leq m$ and all integers $l_1,l_2$, and $l_3$, with $0\leq l_1\leq f\leq f+l_2\leq f^{\prime}\leq f^{\prime}+l_3\leq n$. The last inequality is equivalent to
\begin{equation} \label{i31}
t_{K,L}-k_2l_3-k_3l_2-k_3l_3\geq 0,
\end{equation}
 where $K_1=\{1,\ldots,k_1\}$, $K_2=\{e^{\prime}+1,\ldots,e^{\prime}+k_2\}$, $K_3=\{e+1,\ldots,e+k_3\}$, $K=K_1\cup K_2\cup K_3$,  $L_1=\{1,\ldots,l_1\}$, $L_2=\{f+1,\ldots,f+l_2\}$, $L_3=\{f^{\prime}+1,\ldots,f^{\prime}+l_3\}$, $L=L_1\cup L_2\cup L_3$. Let $A$ be any matrix  in $\mathcal{A}(R,S)$, and  partition  $A$ according to the diagram
$$\begin{array}{c|c|c|c|c|c|c}
        &L_1 & \overline{L_1} & L_2 & \overline{L_2}& L_3 & \overline{L_3} \\ \hline
        K_1 &(0) & &(0) & &(0) & \\ \hline
        \overline{K_1} & &(1) & &(1) & &(1) \\ \hline
        K_2 &(0) & & (0)& &-(1) & \\ \hline
        \overline{K_2} & &(1) & &(1) & &(1) \\ \hline
        K_3 &(0) & &-(1) & & -(1)& \\ \hline
        \overline{K_3} & &(1) & &(1) & &(1)
\end{array},
$$ Then (\ref{i31}) counts $0^{\prime}$ and $1^{\prime}$ in submatrices of $A$ as shown above, where $-(1)$ means the negative of the number of $1^{\prime}$ in the submatrices of $A$ indicated.

On the other hand, the expression $t_{k_1,f^{\prime}+l_3}+t_{e^{\prime}+k_2,f+l_2}+t_{e+k_3,l_1}$ counts $0^{\prime}$ and $1^{\prime}$  as shown below:

$$\begin{array}{c|c|c|c|c|c|c}
        &L_1 & \overline{L_1} & L_2 & \overline{L_2}& L_3 & \overline{L_3} \\ \hline
        K_1 &3(0) &2(0) &2(0) &(0) &(0) & \\ \hline
        \overline{K_1} &2(0) &(0) &(0) & & &(1) \\ \hline
        K_2 &2(0) &(0) & (0)& & &(1) \\ \hline
        \overline{K_2} & (0)& & &(1) & (1)&2(1) \\ \hline
        K_3 &(0) & & &(1) & (1)&2(1) \\ \hline
        \overline{K_3} & &(1) &(1) &2(1) &2(1) &3(1)
\end{array},
$$
In the matrix, $2(0)$ means twice the number of  $0^{\prime}$ in the corresponding submatrices of $A$, $3(0)$ means  three times the number of  $0^{\prime}$ in the corresponding submatrices of $A$, $2(1)$ means twice the number of  $1^{\prime}$ in the corresponding submatrices of $A$, and $3(1)$ means three times the number of  $1^{\prime}$ in the corresponding submatrices of $A$.

It now follows that
\begin{equation} \label{i41}
t_{k_1,f^{\prime}+l_3}+t_{e^{\prime}+k_2,f+l_2}+t_{e+k_3,l_1}-(t_{K,L}-k_2l_3-k_3l_2-k_3l_3)
\end{equation}
counts $0^{\prime}$ and $1^{\prime}$ in submatrices of $A$ as indicated in the matrix diagram.

$$\begin{array}{c|c|c|c|c|c|c}
        &L_1 & \overline{L_1} & L_2 & \overline{L_2}& L_3 & \overline{L_3} \\ \hline
        K_1 &2(0) &2(0) &(0) &(0) & & \\ \hline
        \overline{K_1} &2(0) &(0)-(1) &(0) &-(1) & & \\ \hline
        K_2 &(0) &(0) & & &(1) &(1) \\ \hline
        \overline{K_2} &(0)&-(1)& & & (1)&(1) \\ \hline
        K_3 & & &(1) &(1) & 2(1)&2(1) \\ \hline
        \overline{K_3} & & &(1) &(1) &2(1) &2(1)
\end{array},
$$
Hence, (\ref{i41}) equals $$t_{e,f}+t_{e^{\prime},f^{\prime}}-(e^{\prime}-k_1)(f-l_1)-(e^{\prime}-k_1)(f^{\prime}-f-l_2)-(e-e^{\prime}-k_2)(f-l_1).$$

 Therefore (\ref{i31}) holds if and only if $\psi_{e^{\prime},e;f,f^{\prime}} \geq t_{e,f}+t_{e^{\prime},f^{\prime}}$.\hfill \end{proof}

  \begin{proposition} \label{po-2} Let $R$ and $S$ be be two partitions of the same weight such that $R=(R_1,\ldots ,R_m)$, $S=(S_1,\ldots ,S_n)$, with $m>2$, $n>2$, and $\mathcal{A}(R,S)$ is nonempty. Let $f,f^{\prime}$ be integers such that  $1\leq f<f^{\prime}< n$ and $S_f=S_{f+1}=\ldots =S_n=1$. If $\phi_{2,f}=t_{2,f}$ and $\phi_{1,f^{\prime}}=t_{1,f^{\prime}}$, then $$\psi_{1,2;f,f^{\prime}}\geq t_{1,f^{\prime}}+t_{2,f}.$$
\end{proposition}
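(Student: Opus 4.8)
The plan is to prove the inequality directly from the definitions, using the hypothesis $S_f=S_{f+1}=\cdots=S_n=1$ to linearize the structure matrix in the column index, and then to read off the two required lower bounds from the hypotheses $\phi_{2,f}=t_{2,f}$ and $\phi_{1,f'}=t_{1,f'}$.

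\textbf{Step 1 (linearization).} From $t_{k,l}-t_{k,l-1}=k-S_l$ and $S_l=1$ for all $l\ge f+1$, the entries of $T$ are affine in the column index on the range $l\ge f$: one gets $t_{k,l}=t_{k,f}+(k-1)(l-f)$ for $l\ge f$. The slope $k-1$ vanishes at $k=1$, so in particular $t_{1,f'}=t_{1,f}$, and the target inequality becomes $\psi_{1,2;f,f'}\ge t_{1,f}+t_{2,f}$. Nonnegativity of every entry of $T$ (by the Ford--Fulkerson criterion, since $\mathcal{A}(R,S)\neq\emptyset$) will also be used.

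\textbf{Step 2 (reduction to a pointwise inequality).} Unwinding the definition of $\psi_{1,2;f,f'}$, the admissible tuples satisfy $i_1,i_2\in\{0,1\}$, $0\le i_3\le m-2$, $0\le j_1\le f$, $0\le j_2\le f'-f$, $0\le j_3\le n-f'$. It suffices to show, for each such tuple,
$$t_{i_1,f'+j_3}+t_{1+i_2,f+j_2}+t_{2+i_3,j_1}+(1-i_1)(f'-f-j_2)+(1-i_2)(f-j_1)+(1-i_1)(f-j_1)\ge t_{1,f}+t_{2,f}.$$
The two ``high-column'' terms have column index $\ge f$, so Step 1 rewrites them as $t_{i_1,f}+(i_1-1)(f'+j_3-f)$ and $t_{1+i_2,f}+i_2j_2$.

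\textbf{Step 3 (four cases on $(i_1,i_2)$).} I split according to $(i_1,i_2)\in\{0,1\}^2$. For $(1,1)$ the quadratic part vanishes and the left side equals $t_{1,f}+t_{2,f}+j_2+t_{2+i_3,j_1}$, so the claim follows from $j_2\ge 0$ and the nonnegativity of $T$. In the other three cases the surviving linear terms telescope, and after collecting, the claim reduces to a single instance of one hypothesis. Writing a choice of the four minimisation parameters of $\phi_{k,l}$ as a quadruple $(p,q,r,s)$ giving the value $t_{p,\,l+s}+t_{k+q,\,r}+(k-p)(l-r)$, one uses: for $(1,0)$, the inequality $\phi_{2,f}=t_{2,f}$ at $(p,q,r,s)=(1,i_3,j_1,0)$; for $(0,0)$, the inequality $\phi_{2,f}=t_{2,f}$ at $(0,i_3,j_1,f'+j_3-f)$ (after discarding the nonnegative leftover $(f'-f-j_2)$); and for $(0,1)$, the inequality $\phi_{1,f'}=t_{1,f'}=t_{1,f}$ at $(0,1+i_3,j_1,j_3)$. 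In each case the chosen quadruple lies in the admissible range of the relevant $\phi$, which one checks against the constraints $2+i_3\le m$, $0\le j_1\le f\le f'$, and $f'+j_3\le n$.

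\textbf{Main obstacle.} The delicate cases are $(0,0)$ and $(0,1)$, where negative contributions $-j_2$ and $-j_3$ appear. These must be absorbed \emph{exactly}: the column-shift parameter $s=f'+j_3-f$ is forced so that $t_{p,\,f+s}=t_{0,f'+j_3}$ matches the term actually present, and the remaining $-j_2$ either cancels after collecting (case $(0,1)$) or is dominated by the nonnegative remainder $(f'-f-j_2)$ that one drops (case $(0,0)$). Confirming that these substitutions reproduce the collected left side with no spurious slack is the only genuinely computational part, and it is precisely here that the linearization of Step 1 carries the argument.
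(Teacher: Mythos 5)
Your proof is correct and takes essentially the same route as the paper's: linearize $t_{k,l}$ in the column index for $l\ge f$, split into the four cases $(i_1,i_2)\in\{0,1\}^2$, and in each case conclude via the hypothesis $\phi_{2,f}=t_{2,f}$ (cases $(0,0)$ and $(1,0)$) or $\phi_{1,f'}=t_{1,f'}$ (case $(0,1)$) evaluated at an admissible parameter choice, together with nonnegativity of the structure matrix (case $(1,1)$). The only cosmetic difference is that you keep $t_{0,f'+j_3}$ and absorb it into the $\phi$-bound through the column-shift parameter, whereas the paper in effect optimizes $j_3$ so that this term becomes $t_{0,n}=0$.
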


\begin{proof} Let $p$ be an integer such that $f\leq p\leq n$. Then, $$t_{1,p}=p+\sum_{i=2}^mR_i-\sum_{j=1}^pS_j=t_{1,f}+(p-f)+\sum_{j=f+1}^pS_j.$$ Since $S_f=S_{f+1}=\ldots =S_n=1$ then $$t_{1,p}=t_{1,f}.$$
	
	\vspace{1ex}
	
	Let $s$ be an integer such that $f\leq s \leq n$. Then, $$t_{2,s}=2s+\sum_{i=3}^mR_i-\sum_{j=1}^sS_j=t_{2,f}+2(s-f)+\sum_{j=f+1}^sS_j.$$ Since $S_f=S_{f+1}=\ldots =S_n=1$ then $$t_{2,s}=t_{2,f}+(s-f).$$
	
	\vspace{1ex}
	
	By definition, $$\psi_{1,2;f,f^{\prime}}=\min\{t_{i_1,f^{\prime}+j_3}+t_{1+i_2,f+j_2}+t_{2+i_3,j_1}+$$ $$+(1-i_1)(f^{\prime}-f-j_2)+(2-1-i_2)(f-j_1)+(1-i_1)(f-j_1)\},$$
	where the minimum is taken  over all integers   $i_1,i_2,i_3$ and $j_1,j_2,j_3$  that satisfy
	$$0\leq i_1\leq 1\leq 1+i_2\leq 2\leq 2+i_3\leq m$$ and  $$0\leq j_1\leq f\leq f+j_2\leq f^{\prime}\leq  f^{\prime}+j_3\leq n.$$

	Therefore, we may conclude that:
	\begin{itemize}
		\item If  $i_1=0$, then $t_{i_1,f^{\prime}+j_3}=t_{0,n}=0$;
		\item If  $i_1=1$, then $t_{i_1,f^{\prime}+j_3}=t_{1,f}$;
		\item If  $i_2=0$, then  $t_{1+i_2,f+j_2}=t_{1,f}$;
		\item If  $i_2=1$, then  $t_{1+i_2,f+j_2}=t_{2,f}+ j_2$.
	\end{itemize}
	
	So, we have four cases:
	\begin{itemize}
		\item {\bf Case 1} If  $i_1=0$ and $i_2=0$, then
	$$\psi_{1,2;f,f^{\prime}}=0+t_{1,f^{\prime}}+t_{2+i_3,j_1}+0+(f-j_1)+(f-j_1).$$

 Since $t_{2,f}=\phi_{2,f}\leq 0+t_{2+i_3,j_1}+2(f-j_1)$, we get $$\psi_{1,2;f,f^{\prime}}\geq t_{1,f^{\prime}}+t_{2,f}.$$

		\item {\bf Case 2} If  $i_1=0$ and $i_2=1$, then $$\psi_{1,2;f,f^{\prime}}=0+t_{2,f}+j_2+t_{2+i_3,j_1}+(f^{\prime}-f-j_2)+0+(f-j_1).$$

		Since $t_{1,f^{\prime}}=\phi_{1,f^{\prime}}\leq 0+t_{2+i_3,j_1}+(f^{\prime}-j_1)$, we get
		
		 $$\psi_{1,2;f,f^{\prime}}\geq t_{1,f^{\prime}}+t_{2,f}.$$

		\item {\bf Case 3} If  $i_1=1$ and $i_2=0$, then $$\psi_{1,2;f,f^{\prime}}=t_{1,f^{\prime}}+t_{1,f^{\prime}}+t_{2+i_3,j_1}+0+(f-j_1)+0.$$
		
		Since $t_{2,f}=\phi_{2,f}\leq t_{1,f^{\prime}}+t_{2+i_3,j_1}+(f-j_1)$, we get
		
		$$\psi_{1,2;f,f^{\prime}}\geq t_{1,f^{\prime}}+t_{2,f}.$$

		\item {\bf Case 4} If  $i_1=1$ and $i_2=1$, then
		$$\psi_{1,2;f,f^{\prime}}=t_{1,f^{\prime}}+t_{2,f}+j_2+t_{2+i_3,j_1}+0+0+0.$$
		
		Consequently,
		
		$$\psi_{1,2;f,f^{\prime}}\geq t_{1,f^{\prime}}+t_{2,f}.$$
	\end{itemize}

\hfill\end{proof}

 \begin{lemma} \label{li-1} Let $e,\ e^{\prime}, \ f,\ f^{\prime},\ k,$ and $l$ be nonnegative integers such that $1\leq k<l$, $ke+f<ke^{\prime}+f^{\prime}$, and $le+f>le^{\prime}+f^{\prime}$. Then $e^{\prime}<e$ and $f<f^{\prime}$.
 \end{lemma}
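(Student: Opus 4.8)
The plan is to reduce both hypotheses to inequalities involving only the single quantity $f^{\prime}-f$ and the difference $e-e^{\prime}$, and then to play the two scalars $k$ and $l$ against each other using $k<l$. First I would rearrange: subtracting $ke^{\prime}+f$ from both sides of $ke+f<ke^{\prime}+f^{\prime}$ turns it into $k(e-e^{\prime})<f^{\prime}-f$, and subtracting $le^{\prime}+f$ from both sides of $le+f>le^{\prime}+f^{\prime}$ turns it into $l(e-e^{\prime})>f^{\prime}-f$. Putting these together yields the single chain $k(e-e^{\prime})<f^{\prime}-f<l(e-e^{\prime})$.

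The conclusion $e^{\prime}<e$ then drops out of the two outer terms of this chain. Indeed, $k(e-e^{\prime})<l(e-e^{\prime})$ is equivalent to $(l-k)(e-e^{\prime})>0$, and since $k<l$ gives $l-k>0$, I may cancel the positive factor to obtain $e-e^{\prime}>0$, that is, $e^{\prime}<e$.

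For the remaining conclusion $f<f^{\prime}$, I would return to the left half of the chain, $f^{\prime}-f>k(e-e^{\prime})$. Having just established $e-e^{\prime}>0$, and using the hypothesis $k\geq 1$, the product $k(e-e^{\prime})$ is strictly positive; hence $f^{\prime}-f>k(e-e^{\prime})>0$, which is exactly $f<f^{\prime}$.

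I do not anticipate any real obstacle, since the whole argument is a short manipulation of linear inequalities. The only points worth stating carefully are the two places where a positivity fact is used: that $l-k>0$ (from $k<l$) licenses the cancellation yielding $e^{\prime}<e$, and that $k\geq 1$ together with $e>e^{\prime}$ makes $k(e-e^{\prime})$ strictly positive, yielding $f<f^{\prime}$. Note that integrality of $e,e^{\prime}$ is not even needed for the second step, as strict positivity of $k(e-e^{\prime})$ already suffices.
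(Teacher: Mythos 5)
Your proof is correct and follows essentially the same route as the paper's: both derive the chain $k(e-e^{\prime})<f^{\prime}-f<l(e-e^{\prime})$, compare the outer terms using $k<l$ to get $e-e^{\prime}>0$, and then use $k\geq 1$ on the left inequality to get $f<f^{\prime}$. The only difference is cosmetic — the paper establishes $e^{\prime}<e$ by contradiction (assuming $e\leq e^{\prime}$), whereas you cancel the positive factor $l-k$ directly.
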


 \begin{proof} Since $ke+f<ke^{\prime}+f^{\prime}$, and $le+f>le^{\prime}+f^{\prime}$, we have $$k(e-e^{\prime})<f^{\prime}-f<l(e-e^{\prime}).$$
 If $e\leq e^{\prime}$ then $e-e^{\prime}\leq 0$. Using the inequality $1\leq k<l$ we get $$k(e-e^{\prime})\geq l(e-e^{\prime}).$$ Contradiction. So, $e^{\prime}<e$. Consequently, $e-e^{\prime}>0$ and $$0<k(e-e^{\prime})<f^{\prime}-f.$$ Therefore, $f<f^{\prime}$. \hfill \end{proof}

\vspace{0,3cm}

\begin{proofm} If $t=1$, the result follows. Let $t>1$. Suppose there is no  matrix $A \in \mathcal{A}(R,S)$ such that $$\rho_k (A)=\widetilde{\rho}_k(R,S),\hspace{6ex}\mbox{for } \hspace{6ex}k=1,\ldots ,t.$$ Let $l$ be the greatest integer such that $1\leq l\leq t$ and there is a matrix $D$  in $\mathcal{A}(R,S)$ such that
	$$\rho_1 (D)=\widetilde{\rho}_1(R,S),\ldots, \rho_{l-1} (D)=\widetilde{\rho}_{l-1}(R,S), \,  \text{ and } \, \rho_{l} (D)\neq \widetilde{\rho}_{l}(R,S).$$
		
		Let $B$ a matrix in $\mathcal{A}(R,S)$ such that
		$$\rho_{l} (B)=\widetilde{\rho}_{l}(R,S)<  \rho_{l} (D).$$
		
 So, there is an integer $k$, with $1\leq k<l$, such that
$$\rho_{k+1} (B)=\widetilde{\rho}_{k+1}(R,S),\ldots ,  \rho_{l} (B)=\widetilde{\rho}_{l}(R,S),$$
  and $$  \rho_{k} (B)\neq \widetilde{\rho}_{k}(R,S)=\rho_{k} (D). $$

 Consequently, $$ \rho_{k} (B)>\rho_{k} (D), \, \mbox{ and }  \, \rho_{l} (D)>\rho_{l} (B).$$

 By Proposition \ref{t}, there are nonnegative integers $a,\ b, \ c,\ d$, such that $\widetilde{\rho_{k} }(R,S)=kb+c$, $\phi_{b,c}=t_{b,c}$, and $\widetilde{\rho_{l} }(R,S)=la+d$, $\phi_{a,d}=t_{a,d}$.

		Then $1\leq k<l$, $kb+c<ka+d$, and $lb+c>la+d$.
		Using Lemma \ref{li-1}, we get  $a<b$ and $c<d$. By hypothesis, $a=1$, $b=2$, $c=f$ and $d=f^{\prime}$.
		
		Using Proposition \ref{po-2}, we have $$\psi_{1,2;f,f^{\prime}}\geq t_{1,f^{\prime}}+t_{2,f}.$$ By Proposition \ref{po-1}, we conclude that there exists a matrix $Q$ in $\mathcal{A}(R,S)$ such that all $1^{\prime}$ of $Q$ are contained in the union of the first $2$ rows and first $f$ columns, and  all $1^{\prime}$ of $Q$ are contained in the union of the first  row and first $f^{\prime}$ columns. Thus, by Proposition \ref{te+f} $$\rho_1 (Q)=\widetilde{\rho_1}(R,S), \ldots,  \rho_{l} (Q)=\widetilde{\rho}_{l}(R,S).$$ Contradiction. Therefore,  there exists a matrix $A \in \mathcal{A}(R,S)$ such that $\rho_k (A)=\widetilde{\rho}_k(R,S),\mbox{ for all }k=1,\ldots ,t.$

	 \hfill\end{proofm}

\section{Algorithm for construct matrices with fixed zero blocks \label{algo}}

\hspace{3ex} Let $R=(R_1,\ldots , R_n)$ and $S=(S_1,\ldots ,S_n)$ be partitions of the same weight. Let $e,e',f$ and $f'$ positive integers such that  $0\leq e < e'\leq m$ and $0\leq   f^{\prime}<f\leq n$. Denote by $\mathcal{A}_{e,e^{\prime};f^{\prime}f}(R,S)$ the set of all matrices of $\mathcal{A}(R,S)$ whose all 1's  are covered by $e$ rows and $f$ columns and, by $e^{\prime}$ rows and $f^{\prime}$ columns. Assume that $\mathcal{A}_{e,e^{\prime};f^{\prime}f}(R,S)$ is nonempty. In this section we present an algorithm for construct a matrix in  $\mathcal{A}_{e,e^{\prime};f^{\prime}f}(R,S)$. This algorithm  generalizes the algorithm stated by Brualdi and Dahl in \cite{B-4} for construct a matrix in $\mathcal{A}_{e,f}(R,S)$ (the subset of all matrices of $\mathcal{A}(R,S)$ whose all 1's  are covered by $e$ rows and $f$ columns).

First we describe the  modified Ryser algorithm.

\vspace{2ex}

Let $R^{(e)}=(R_1,\ldots ,R_e)$, $S^{(f)}=(S_1,\ldots ,S_f)$.

Let $F=(F_1,\ldots ,F_p)$ be an integer vector with $p$-coordinates. Let $\sigma_F$ be a permutation of $\{1,\ldots ,p\}$ such that the vector $F_{\sigma}=(F_{\sigma(1)},\ldots ,F_{\sigma(p)})$ verify $F_{\sigma(1)}\geq \ldots \geq F_{\sigma(p)}$. We denote by $P_{\sigma_F}$ the $p$-by-$p$ permutation matrix associated with $\sigma_F$, and by $P_{\sigma_F}^{-1}$ we denote its inverse.

\vspace{3ex}

{\bf The modified Ryser algorithm:}
\begin{enumerate}
	\item Start with an $m$-by-$n$ $(0,1)$-matrix $B_m$ whose row sum vector is $R$ and whose column sum vector is  $R^*$. Thus the 1's occupy the initial positions in each row. Let $B_{e,n}$ be the submatrix obtained from $B_m$ deleting rows $e+1,\ldots ,m$. Let $\overline{B}_0$ be the $e$-by-$0$ empty matrix.
	\item  For $k=n,\ n-1,\ldots ,f+1$, do:
	
	Shift to column $k$ the final $1$'s in those $S_k$ rows of $B_{e,k}$ with the largest sum, with preference given to the lowest rows (those with the largest index) in case of ties. This results in a matrix $$\left[\begin{array}{c|c}B_{e,k-1}&\overline{B}_{n-k+1}\end{array}\right],$$ where $B_{e,k-1}$ has $k-1$ columns.
	
	\item Let $C_n$ be the  $n$-by-$m$ $(0,1)$-matrix  whose row sum vector is $S$ and whose column sum vector is  $S^*$. Thus the 1's occupy the initial positions in each row. Let $C_{f,m}$ be the submatrix obtained from $C_n$ deleting rows $f+1,\ldots ,n$. Let $\overline{C}_0$ be the $f$-by-$0$ empty matrix.

	\item For $h=m,\ m-1,\ldots ,e+1$, do:
	
	Shift to column $h$ the final $1$'s in those $R_h$ rows of $C_{f,h}$ with the largest sum, with preference given to the lowest rows (those with the largest index) in case of ties. This results in a matrix $$\left[\begin{array}{c|c}C_{f,h-1}&\overline{C}_{m-h+1}\end{array}\right],$$ where $C_{f,h-1}$ has $h-1$ columns.
	
	\item Let $\hat{R}$ be the row-sum sequence of $\overline{B}_{n-f}$ and $\hat{S}$ be the row-sum sequence of $\overline{C}_{m-e}$. Let $\bar{R}=R^{(e)}-\hat{R}$ and $\bar{S}=S^{(f)}-\hat{S}$.
	
	\item Let  $A_1$ be the canonical matrix of ${\cal A}(\bar{R}_{\sigma_{\bar{R}}},\bar{S}_{\sigma_{\bar{S}}})$.
\end{enumerate}

{\bf Output:} $$A=\left[\begin{array}{c|c} & \\ P^{-1}_{\sigma_{\bar{R}}}A_1P^{-1}_{\sigma_{\bar{S}}}&\overline{B}_{n-f}\\ & \\ \hline & \\ \overline{C}^T_{m-e}& 0 \\ &  \end{array}\right],$$ where $\overline{C}^T_{m-e}$ is the transpose of $\overline{C}_{m-e}$.

\vspace{3ex}

The matrix $\overline{B}_{n-f}$ is the canonical column $f$-submatrix relative to $R$, $S$ and $f$.  The matrix $\overline{C}_{m-e}$ is the canonical column $e$-submatrix relative to $S$, $R$ and $e$.

\vspace{3ex}

\begin{example} Let $R=(4,2,2,2,1,1,1)$ and $S=(2,2,2,2,1,1,1,1,1)$. Then $R^*=(7,4,1,1)$. It is possible to prove that there is a matrix $A\in \mathcal{A}(R,S)$ whose all 1's are covered by $2$ rows and $4$ columns.

	Using last algorithm let $$B_7=\left[\begin{array}{ccccccccc}1&1&1&1&0&0&0&0&0\\ 1&1&0&0&0&0&0&0&0\\ 1&1&0&0&0&0&0&0&0\\ 1&1&0&0&0&0&0&0&0\\  1&0&0&0&0&0&0&0&0\\ 1&0&0&0&0&0&0&0&0\\ 1&0&0&0&0&0&0&0&0\end{array}\right]$$ and $$B_{2,9}=\left[\begin{array}{ccccccccc}1&1&1&1&0&0&0&0&0\\ 1&1&0&0&0&0&0&0&0\end{array}\right].$$
	
	The following matrices $\left[\begin{array}{c|c}B_{2,i}&\overline{B}_{9-i}\end{array}\right]$ are produced using step {\rm 2.}.
	$$\left[\begin{array}{ccccccccc|}1&1&1&1&0&0&0&0&0\\ 1&1&0&0&0&0&0&0&0\end{array}\right]\ ,\ \left[\begin{array}{cccccccc|c}1&1&1&0&0&0&0&0&1\\ 1&1&0&0&0&0&0&0&0\end{array}\right]\ ,$$ $$\left[\begin{array}{ccccccc|cc}1&1&0&0&0&0&0&1&1\\ 1&1&0&0&0&0&0&0&0\end{array}\right]\ , \ \left[\begin{array}{cccccc|ccc}1&1&0&0&0&0&0&1&1\\ 1&0&0&0&0&0&1&0&0\end{array}\right]\ ,$$ $$\left[\begin{array}{ccccc|cccc}1&0&0&0&0&1&0&1&1\\ 1&0&0&0&0&0&1&0&0\end{array}\right]\ , \ \left[\begin{array}{cccc|ccccc}1&0&0&0&0&1&0&1&1\\ 0&0&0&0&1&0&1&0&0\end{array}\right].$$
	
	In step {\rm 3.} $$C_{4,7}=\left[\begin{array}{ccccccc}1&1&0&0&0&0&0\\ 1&1&0&0&0&0&0\\ 1&1&0&0&0&0&0\\ 1&1&0&0&0&0&0\end{array}\right].$$
	
	The following matrices $\left[\begin{array}{c|c}C_{4,i}&\overline{C}_{7-i}\end{array}\right]$ are produced using step {\rm 4.}.

	$$\left[\begin{array}{ccccccc|}1&1&0&0&0&0&0\\ 1&1&0&0&0&0&0\\ 1&1&0&0&0&0&0\\ 1&1&0&0&0&0&0\end{array}\right]\ ,\ \left[\begin{array}{cccccc|c}1&1&0&0&0&0&0\\ 1&1&0&0&0&0&0\\ 1&1&0&0&0&0&0\\ 1&0&0&0&0&0&1\end{array}\right]\ ,$$
	$$\left[\begin{array}{ccccc|cc}1&1&0&0&0&0&0\\ 1&1&0&0&0&0&0\\ 1&0&0&0&0&1&0\\ 1&0&0&0&0&0&1\end{array}\right]\ ,\ \left[\begin{array}{cccc|ccc}1&1&0&0&0&0&0\\ 1&0&0&0&1&0&0\\ 1&0&0&0&0&1&0\\ 1&0&0&0&0&0&1\end{array}\right]\ ,$$
	$$\left[\begin{array}{ccc|cccc}1&0&0&1&0&0&0\\ 1&0&0&0&1&0&0\\ 1&0&0&0&0&1&0\\ 0&0&0&1&0&0&1\end{array}\right]\ ,\ \left[\begin{array}{cc|ccccc}1&0&0&1&0&0&0\\ 0&0&1&0&1&0&0\\ 0&0&1&0&0&1&0\\ 0&0&0&1&0&0&1\end{array}\right]$$
	
	In this case, in step {\rm 5.}, $$\overline{B}_{5}= \left[\begin{array}{ccccc}0&1&0&1&1\\ 1&0&1&0&0\end{array}\right],$$ with row sum sequence $\hat{R}=(3,2)$,
	$$\overline{C}_{5}= \left[\begin{array}{ccccc}0&1&0&0&0\\ 1&0&1&0&0\\ 1&0&0&1&0\\ 0&1&0&0&1\end{array}\right],$$ with row sum sequence $\hat{S}=(1,2,2,2)$.
	
	So, $\bar{R}=R^{(e)}-\hat{R}=(4,2)-(3,2)=(1,0)$ and $\bar{S}=S^{(f)}-\hat{S}=(2,2,2,2)-(1,2,2,2)=(1,0,0,0)$
	
	A canonical matrix of ${\cal A}(\bar{R},\bar{S})$ is $$A_1=\left[\begin{array}{cccc}1&0&0&0\\ 0&0&0&0\end{array}\right].$$
	
	Therefore, the final matrix is $$A=\left[\begin{array}{cccc|ccccc}1&0&0&0&0&1&0&1&1\\ 0&0&0&0&1&0&1&0&0 \\ \hline 0&1&1&0&0&0&0&0&0\\ 1&0&0&1&0&0&0&0&0\\ 0&1&0&0&0&0&0&0&0\\ 0&0&1&0&0&0&0&0&0\\ 0&0&0&1&0&0&0&0&0\end{array}\right].$$
\end{example}

\begin{theorem}\label{t12} Let $R=(R_1,\ldots ,R_m)$ and $S=(S_1,\ldots ,S_n)$ be partitions of the same weight. Let $e,e',f$ and $f'$ positive integers with  $0\leq e < e'\leq m$, $0\leq   f^{\prime}<f\leq n$, and ${\cal A}_{e,e';f',f}(R,S)\neq \emptyset$. Then there is a matrix
	$$A=\left[
	\begin{array}{ccc}
	A_{11} & A_{12} &A_{f} \\
	A_{21} & A_{22} & 0 \\
	A^T_{e'} & 0 & 0 \\
	\end{array}
	\right],
	$$
	in the class ${\cal A}_{e,e';f',f}(R,S)$, where $A_{f}$ is the  canonical column $f$-submatrix relative to $R$, $S$, and $f$, and $A_{e'} $ is the canonical column $e'$-submatrix  relative to $S$, $R$ and $e'$,  $A_{e'} ^T$ is its transpose, and there are permutation matrices $P$ and $Q$ such that
	$$P\left[
	\begin{array}{cc}
	A_{11} & A_{12}\\
	A_{21} & A_{22}
	\end{array}
	\right]Q$$
	is the canonical matrix in the class where it belongs.
	
\end{theorem}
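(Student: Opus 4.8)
The plan is to prove that the output produced by the modified Ryser algorithm of this section belongs to $\mathcal{A}_{e,e';f',f}(R,S)$ and has the displayed form, following the scheme used by Brualdi and Dahl in \cite{B-4} for the class $\mathcal{A}_{e,f}(R,S)$. First I would partition the rows into the blocks $\{1,\ldots,e\}$, $\{e+1,\ldots,e'\}$, $\{e'+1,\ldots,m\}$ and the columns into $\{1,\ldots,f'\}$, $\{f'+1,\ldots,f\}$, $\{f+1,\ldots,n\}$. Any matrix of $\mathcal{A}_{e,e';f',f}(R,S)$ may be assumed to have all its $1$'s inside the first $e$ rows together with the first $f$ columns, and also inside the first $e'$ rows together with the first $f'$ columns; intersecting the two complementary all-zero regions forces precisely the zero blocks in the $(2,3)$, $(3,2)$ and $(3,3)$ positions of the asserted matrix. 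Hence the shape is automatic, and the real content is that the three nonzero ``boundary'' pieces can be realized as claimed.

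The key observation is that the two boundary blocks decouple from the interior. Because the blocks in positions $(2,3)$ and $(3,3)$ vanish, every $1$ in a column $j>f$ lies in one of the first $e$ rows, so the upper right $e$-by-$(n-f)$ block is an independent transportation problem with column sums $S_{f+1},\ldots,S_n$; Steps~1--2 of the modified algorithm fill it with the canonical column $f$-submatrix $A_f$, whose row-sum vector I denote $\hat R$. Symmetrically, since the blocks $(3,2)$ and $(3,3)$ vanish, every $1$ in a row $i>e'$ lies in the first $f'$ columns, so the lower left block is the transpose of the canonical column $e'$-submatrix $A_{e'}^T$ obtained by running Steps~3--4 on the transposed data, with column-sum vector $\hat S$. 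Subtracting $\hat R$ and $\hat S$ from the affected coordinates leaves an interior $e'$-by-$f$ block carrying no further zero constraints, with row sums $\bar R=(R_1-\hat R_1,\ldots,R_e-\hat R_e,R_{e+1},\ldots,R_{e'})$ and column sums $\bar S=(S_1-\hat S_1,\ldots,S_{f'}-\hat S_{f'},S_{f'+1},\ldots,S_f)$. It then suffices to place the canonical matrix of $\mathcal{A}(\bar R_{\sigma_{\bar R}},\bar S_{\sigma_{\bar S}})$ there and to set $P=P^{-1}_{\sigma_{\bar R}}$ and $Q=P^{-1}_{\sigma_{\bar S}}$, so that the central $2$-by-$2$ block is carried to the canonical matrix of its class by definition.

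The hard part will be to show that the two \emph{canonical} boundary choices are simultaneously realizable, that is, that the residual class $\mathcal{A}(\bar R_{\sigma_{\bar R}},\bar S_{\sigma_{\bar S}})$ is nonempty. I would deduce this from the hypothesis $\mathcal{A}_{e,e';f',f}(R,S)\neq\emptyset$: starting from any $M$ in that class, I would apply a finite sequence of interchanges confined to the first $e$ rows (each swapping a $1$ between a column $j>f$ and a column $j'\le f$) to turn the upper right block into $A_f$, and a finite sequence of interchanges confined to the first $f'$ columns to turn the lower left block into $A_{e'}^T$. Such interchanges keep the matrix in $\mathcal{A}(R,S)$ and never create a $1$ in a zero-block position, so the class $\mathcal{A}_{e,e';f',f}(R,S)$ is preserved; moreover the two normalizations do not interfere, since the two boundary blocks occupy disjoint sets of rows ($e<e'$) and disjoint sets of columns ($f'<f$). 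That the normalizing interchanges always exist until the boundary blocks reach their canonical form is exactly the extremal (greedy) property of Ryser's construction — equivalently the Gale--Ryser inequalities for the residual sums, which I would verify directly through Lemma \ref{l1}. The matrix reached at the end of this process exhibits an interior filling with row sums $\bar R$ and column sums $\bar S$, which proves the residual class nonempty; its canonical matrix then exists by the Gale--Ryser theorem and Ryser's algorithm, and reassembling the five pieces yields a matrix of the required form in $\mathcal{A}_{e,e';f',f}(R,S)$.
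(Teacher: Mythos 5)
Your proposal is correct and takes essentially the same route as the paper's own proof: starting from an arbitrary matrix of the nonempty class, you apply Ryser interchanges confined to the first $e$ rows to force the upper-right block to be $A_f$, then interchanges confined to the first $f'$ columns to force the lower-left block to be $A_{e'}^T$ (neither disturbing the zero blocks nor the other canonical block), and finally you replace the interior $2$-by-$2$ block by the permuted canonical matrix of its residual class via $P^{-1}_{\sigma_{\bar R}}$ and $P^{-1}_{\sigma_{\bar S}}$. The only difference is one of framing — you phrase the interchange step as establishing nonemptiness of $\mathcal{A}(\bar R_{\sigma_{\bar R}},\bar S_{\sigma_{\bar S}})$, while the paper transforms the matrix directly — and both arguments rely, at the same level of detail, on the greedy/extremal property guaranteeing that the canonical boundary configurations are reachable by interchanges.
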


\begin{proof} Let
	$$D=\left[
	\begin{array}{ccc}
	A_1 & A_2 & A_3 \\
	A_4 & A_5 & 0 \\
	A_6 & 0 & 0 \\
	\end{array}
	\right]$$
	be a matrix in  ${\cal A}_{e,e';f',f}(R,S)$.  From Ryser's interchange theorem we may apply interchanges to the matrix
	$$[A_1\,\,\, \,A_2\,\,\,\, A_3]$$
	to obtain the matrix
	$$[B_1 \,\,\, \,B_2\, \,\,\,A_{f}].$$

	Applying a similar argument to the matrix
	$$\left[
	\begin{array}{c}
	B_1 \\
	A_4 \\
	A_6 \\
	\end{array}
	\right]
	$$
	we obtain the matrix
	
	$$\left[
	\begin{array}{ccc}
	C_1 &  B_2 &  A_{f'} \\
	B_3 & A_5 & 0 \\
	A_{e'}^T & 0 & 0 \\
	\end{array}.
	\right].
	$$
	Consider the submatrix
	$$C=\left[
	\begin{array}{cc}
	C_1 &  B_2 \\
	B_3 & A_5
	\end{array}
	\right].$$
	
	Let $R'$ and $S'$ be the row sum vector and the column sum vector  of $C$, respectively, and let $C'$ be the canonical matrix of $\mathcal{A}(R'_{\sigma_{R'}},S'_{\sigma_{S'}})$. Then, there are permutation matrices $P_1$ and $Q_1$ such that $P_1C'Q_1$ has row sum vector $R'$ and column sum vector $S'$.  Now we replace $C$ by $C'$ and we have the desired matrix.
\end{proof}

\vspace{0,2cm}

We can now use Theorem \ref{t12} to give an algorithm to construct a matrix in ${\cal A}_{e,e';f,f'}(R,S)$:

\vspace{0,2cm}
{\bf Algorithm:}

\vspace{2ex}

Let $e,e',f$ and $f'$ positive integers such that  $0\leq e^{\prime}< e\leq m$ and $0\leq f<f^{\prime}\leq n$.
\begin{enumerate}
	\item Construct  $A_{f}$, the canonical column $f$-submatrix,  relative to $R$, $S$ and $f$.
	\item Construct $A_{e'}$, the canonical column $e'$-submatrix,  relative to $S$, $R$ and $e$.
	
	\item Let $\hat{R}$ be the row-sum sequence of $A_{f}$ and $\hat{S}$ be the row-sum sequence of $A_{e´'}$. Let $\bar{R}=R^{(e')}-\hat{R}$ and $\bar{S}=S^{(f)}-\hat{S}$.
	\item Let  $A_1$ be the canonical matrix of ${\cal A}(\bar{R}_{\sigma_{\bar{R}}},\bar{S}_{\sigma_{\bar{S}}})$.
\end{enumerate}

\vspace{3ex}

\begin{example} Let $R=(4,2,2,2,1,1,1)$ and $S=(2,2,2,2,1,1,1,1,1)$ as in last example. It is possible to prove that there is a matrix $A\in \mathcal{A}(R,S)$ whose all 1´s are  covered by $2$ rows and $4$ columns and, by $3$ rows and $3$ columns.
	
	In this case, $e=2$, $e'=3$, $f'=3$ and $f=4$.
	
	In last example we constructed the canonical column $f$-submatrix,  relative to $R$, $S$ and $f$,
	$$A_{f}=\bar{B}_5=\left[\begin{array}{ccccc}0&1&0&1&1\\ 1&0&1&0&0\end{array}\right].$$ This matrix has  row sum sequence $\hat{R}=(3,2)$.
	
	The canonical column $e'$-submatrix,  relative to $S$, $R$ and $e'$ is
	$$A_{e'}=\left[\begin{array}{cccc}0&1&0&0\\  1&0&1&0\\ 1&0&0&1 \end{array}\right].$$
	This matrix has  row sum sequence $\hat{S}=(1,2,2)$.

	So, $\bar{R}=R^{(e')}-\hat{R}=(4,2,2)-(3,2,0)=(1,0,2)$ and $\bar{S}=S^{(f)}-\hat{S}=(2,2,2,2)-(1,2,2,0)=(1,0,0,2)$
	
	A canonical matrix of ${\cal A}(\bar{R}_{\sigma_{\bar{R}}},\bar{S}_{\sigma_{\bar{S}}})={\cal A}((2,1,0),(2,1,0,0))$ is $$A_1=\left[\begin{array}{cccc}1&1&0&0\\ 1&0&0&0 \\ 0&0&0&0\end{array}\right].$$
	
	Therefore, the final matrix is $$A=\left[\begin{array}{c|c}\begin{array}{cccc}0&0&0&1\\ 0&0&0&0 \\ 1&0&0&1\end{array}& \begin{array}{ccccc}0&1&0&1&1\\ 1&0&1&0&0 \\ \hline 0&0&0&0&0\end{array}\\ \hline \begin{array}{ccc|c}0&1&1&0\\ 1&0&0&0\\ 0&1&0&0\\ 0&0&1&0\end{array}& \begin{array}{ccccc}0&0&0&0&0\\ 0&0&0&0&0\\ 0&0&0&0&0\\ 0&0&0&0&0\end{array}\end{array}\right].$$
\end{example}

\vspace{2ex}

\end{document}